\begin{document}

\newcommand{\mE}{\mathcal{E}}
\newcommand{\be}{\begin{eqnarray}}
\newcommand{\ee}{\end{eqnarray}}
\newcommand{\bea}{\begin{eqnarray}}
\newcommand{\eea}{\end{eqnarray}}
\newcommand{\cL}{\mathcal{L}}
\newcommand{\E}[1]{{E}_{#1}}
\newcommand{\dq}{q \frac{\partial}{\partial q}}
\newcommand{\p}{\partial}
\newcommand{\ib}{\bar{\imath}}
\newcommand{\jb}{\bar{\jmath}}
\newcommand{\lb}{\bar{l}}
\newcommand{\mb}{\bar{m}}
\newcommand{\kb}{\bar{k}}
\newcommand{\bb}{\bar{b}}

\def\tgtwo{\mathfrak{t}}
\def\tgone{\mathfrak{t}}
\def\tgzero{\mathfrak{t}}
\def\kgone{\mathfrak{k}}
\def\ggtwo{\mathfrak{g}}
\def\ggzero{\mathfrak{g}}

\def\liealg{{\mathfrak G}}    

\def\Fg{{\sf F}}     
\def\hol{{\rm hol}}  
\def\non{{\rm non}}  
\def\alg{{\rm alg}}  

\def\Z{\mathbb{Z}}                   
\def\Q{\mathbb{Q}}                   
\def\C{\mathbb{C}}                   
\def\N{\mathbb{N}}                   
\def\uhp{{\mathbb H}}                
\def\A{\mathbb{A}}                   
\def\dR{{\rm dR}}                    
\def\F{{\cal F}}                     
\def\Sp{{\rm Sp}}                    
\def\Gm{\mathbb{G}_m}                 
\def\Ga{\mathbb{G}_a}                 
\def\Tr{{\rm Tr}}                      
\def\tr{{{\mathsf t}{\mathsf r}}}                 
\def\k{{\sf k}}                     
\def\ring{{\sf R}}                   
\def\X{{\sf X}}                      
\def\T{{\sf T}}                      
\def\Ts{{\sf S}}
\def\cmv{{\sf M}}                    
\def\BG{{\sf G}}                       
\def\podu{{\sf pd}}                   
\def\ped{{\sf U}}                    
\def\per{{\sf  P}}                   
\def\gm{{\sf  A}}                    
\def\ben{{\sf b}}                    

\def\Rav{{\mathfrak M }}                     
\def\Ram{{\mathfrak C}}                     
\def\Rap{{\mathfrak G}}                     

\def\nov{{\sf  n}}                    
\def\mov{{\sf  m}}                    
\def\Yuk{{\sf C}^\alg}                 
\def\YukA{{\sf C}}                     
\def\Ra{{\sf R}}                      
\def\hn{{\sf h}}                      
\def\cpe{{\sf C}}                     
\def\g{{\sf g}}                       
\def\t{{\sf t}}                       
\def\pedo{{\sf  \Pi}}                  

\def\Der{{\rm Der}}                   
\def\MMF{{\sf MF}}                    
\def\codim{{\rm codim}}                
\def\dim{{\rm    dim}}                
\def\Lie{{\rm Lie}}                   
\def\gg{{\mathfrak g}}                

\def\u{{\sf u}}                       

\def\imh{{  \Psi}}                 
\def\imc{{  \Phi }}                  
\def\stab{{\rm Stab }}               
\def\Vec{{\rm Vec}}                 
\def\prim{{\rm prim}}                  

\def\Fg{{\sf F}}     
\def\hol{{\rm hol}}  
\def\non{{\rm non}}  
\def\alg{{\rm alg}}  

\def\bcov{{\rm \Q(\T)}}

\def\spec{{\rm Spec}}            
\def\ker{{\rm ker}}              
\def\GL{{\rm GL}}                

\def\O{{\cal O}}                     

\def\Mat{{\rm Mat}}              

\newtheorem{theo}{Theorem}
\newtheorem{exam}{Example}
\newtheorem{coro}{Corollary}
\newtheorem{defi}{Definition}
\newtheorem{prob}{Problem}
\newtheorem{lemm}{Lemma}
\newtheorem{prop}{Proposition}
\newtheorem{rem}{Remark}
\newtheorem{conj}{Conjecture}
\newtheorem{calc}{}

\begin{center}
{\LARGE\bf Gauss-Manin connection in disguise: \\ Calabi-Yau threefolds
\footnote{ 
Math. classification: 14N35, 
14J15, 32G20
\\
Keywords: Gauss-Manin connection, Yukawa coupling, Hodge filtration, Griffiths transversality, BCOV anomaly equation.}}
\\
\vspace{.25in} {\large {\sc Murad Alim, Hossein Movasati, Emanuel Scheidegger, \\ Shing-Tung Yau}}
\\

\end{center}

\begin{abstract}
 We describe a Lie Algebra on the moduli space of  Calabi-Yau threefolds enhanced with differential forms and its relation to the Bershadsky-Cecotti-Ooguri-Vafa holomorphic anomaly equation. In particular, 
we describe algebraic topological string partition functions $\Fg_g^\alg,\ g\geq 1$, which encode the polynomial structure of holomorphic and non-holomorphic 
topological string partition functions. Our approach is based on Grothendieck's algebraic de Rham cohomology and on the algebraic Gauss-Manin connection.
In this way, we recover a result
of Yamaguchi-Yau and Alim-L\"ange in an algebraic context. 
Our proofs use the fact that the special polynomial generators defined using the special 
geometry of deformation spaces of Calabi-Yau threefolds correspond to coordinates on such a moduli space. We discuss the mirror quintic as an example.
\end{abstract}

\tableofcontents


\section{Introduction}
Mirror symmetry identifies deformation families of Calabi-Yau (CY) threefolds. It originates from two dimensional sigma models into CY target spaces $\check{X}$ and $X$ and two equivalent twists which give the A- and the B-model and which probe the symplectic and complex geometry of $\check{X}$ and $X$ respectively \cite{Witten:1988xj,Witten:1991zz}. 

Mirror symmetry is a rich source of far-reaching predictions, especially regarding the enumerative geometry of maps from genus $g$ Riemann surfaces into a CY threefold $\check{X}$. The predictions are made by performing computations on the B-model side which sees the deformations of complex structure of the mirror CY $X$. The non-trivial step, which is guided by physics, is to identify the equivalent structures on the A-model side and match the two.

The first enumerative predictions of mirror symmetry at genus zero were made by Candelas, de la Ossa, Green and Parkes in Ref.~\cite{Candelas:1990rm}, higher genus predictions were put forward by Bershadsky, Cecotti, Ooguri and Vafa (BCOV) in Refs.~\cite{Bershadsky:1993ta,Bershadsky:1993cx}. To prove these predictions and formulate them rigorously is a great mathematical challenge.

The formulation of the moduli spaces of stable maps by Kontsevich \cite{Kontsevich:hms} provided a mathematical formulation of the $A$-model and a check of many results confirming the predictions of mirror symmetry. The computations of Ref.~\cite{Candelas:1990rm} for genus zero Gromov-Witten invariants  
were put in a Hodge theoretical context by Morrison in Ref.~\cite{Morrison:1992}. Genus zero mirror symmetry can now be understood as matching two different variations of Hodge structure associated to $\check{X}$ and $X$, See Refs.~\cite{Morrison:rev,Cox:1999,Voisin:MS}.

Mirror symmetry at higher genus remains challenging both computationally and conceptually. A fruitful way to think about higher genus mirror symmetry is through geometric quantization as proposed by Witten in Ref.~\cite{Witten:1993ed}.  A mathematical reformulation of BCOV for the B-model was put forward by Costello and Li in Ref.~\cite{Costello:2012cy}.

In the present work we will follow a different approach and put forward a new algebraic framework to formulate higher genus mirror symmetry where we can work over an arbitrary field of characteristic zero. Our approach is based on Grothendieck's algebraic de Rham cohomology and Katz-Oda's algebraic Gauss-Manin connection. It further builds on results of Yamaguchi-Yau \cite{Yamaguchi:2004bt} and Alim-L\"ange\cite{Alim:2007qj}, who uncovered a polynomial structure of higher genus topological string partition functions. This polynomial structure is based on the variation of Hodge structure at genus zero and puts forward variants of BCOV equations which can be understood in a purely holomorphic context. 

In the algebraic context, surprisingly, no reference to periods or variation of Hodge structures is needed,  as all these are hidden in the so called \emph{Gauss-Manin connection
in disguise}.\footnote{The terminology arose from a private letter of Pierre Deligne to the second author~\cite{Deligne:letter}.} The new way of looking Gauss-Manin connection was studied by the second author, see Ref.~\cite{ho14} for elliptic curve case,  Ref.~\cite{ho22} for mirror quintic case and Ref.~\cite{ho18} for a general framework. 
 The richness of this point of view 
is due to its base space which is 
the moduli space of varieties of a fixed topological type and enhanced with differential forms. Computations on such moduli spaces were
already implicitly in use by Yamaguchi-Yau \cite{Yamaguchi:2004bt}  and Alim-L\"ange \cite{Alim:2007qj} without referring to the moduli space itself, however, its introduction 
and existence in algebraic geometry for special cases go back to the works of the second author.  
Such moduli spaces give a natural
framework for dealing with both automorphic forms and topological string partition functions. 
In the case of elliptic curves \cite{ho14},  the theory of modular and quasi-modular forms is recovered. In the case of compact Calabi-Yau threefolds we obtain new types of  functions  which transcend the world of automorphic forms. In the present text we develop the algebraic structure for any CY threefold. As an example, we study the mirror quintic in detail.

In the following, we recall the basic setting of Refs.~\cite{ho18, ho22}. For a background in Hodge theory and algebraic de Rham cohomology we refer to Grothendieck's original article \cite{Grothendieck:1966} or Deligne's lecture notes in \cite{Deligne:lecture}. Let $\k$ be a field of characteristic zero small enough so that it can be embedded in $\C$. For a Calabi-Yau threefold $X$ defined over $\k$ let $H_\dR^3(X)$ be  the third algebraic de Rham cohomology of $X$ and
$$
0=F^4\subset F^3\subset F^2\subset F^1\subset F^0=H_\dR^3(X)\,,
$$
be the corresponding  Hodge filtration. The intersection form  in $H^3_\dR(X)$ is defined to be
\begin{equation}
\label{16s2014}
H_\dR^3(X)\times H_\dR^3(X)\to \k,\ \ \langle\omega_1,\omega_2\rangle=\Tr(\omega_1\cup \omega_2):= \frac{1}{(2\pi i)^3}\int_{X}\omega_1\cup \omega_2\,.
\end{equation}
All the structure above, that is, the de Rham cohomology, its Hodge filtration and intersection form, is also defined over $\k$, that is they do not depend on the embedding $\k\hookrightarrow \C$, 
see for instance Deligne's lecture notes \cite{Deligne:lecture}.  
Let $\hn=\dim(F^2)-1$  and let $\Phi$ be the following constant matrix:
\begin{equation}
\label{31aug10}
\Phi:=
\begin{pmatrix}
0&0&0&-1\\
0& 0&\mathbbm{1}_{\hn \times \hn}&0\\
0& -\mathbbm{1}_{\hn\times \hn}&0&0\\
 1&0&0&0
\end{pmatrix}.
\end{equation}
Here, we use $(2\hn+2)\times (2\hn+2)$ block matrices according to the decomposition $2\hn+2=1+\hn+\hn+1$ and 
$\mathbbm{1}_{\hn\times \hn}$ denotes the $\hn\times \hn$ identity matrix.  
The following definition is taken from Ref.~\cite{ho18}. 
An enhanced Calabi-Yau threefold is a
pair $(X,[\omega_1,\omega_2,\ldots,\omega_{2\hn+2}])$, 
where $X$ is as before and  $\omega_1,\omega_2,\ldots,\omega_{2\hn+2}$ 
is a basis of $H^3_\dR(X)$. 
We choose the basis such that  
\begin{enumerate}
\item
It is compatible with the Hodge filtration, that is, $\omega_1\in F^3$, $\omega_1, \omega_2,\ldots,\omega_{\hn+1}\in F^2$, $\omega_1, \omega_2,\ldots,\omega_{2\hn+1} \in F^1$ and  $ \omega_1, \omega_2,\ldots,\omega_{2\hn+2}  \in F^0$. 
\item 
The intersection form in this basis is the constant matrix $\imc$: 
\begin{equation}
\label{intmatrix}
[\langle \omega_i,\omega_j\rangle]=\Phi.
\end{equation}
\end{enumerate}
Let $\T$  be the moduli of enhanced Calabi-Yau threefolds of a 
fixed topological type. 
The algebraic group 
\begin{equation}
\BG:=\left \{\g\in \GL(2\hn+2,\k)\mid \g \text{ is block upper triangular and  }  \g^\tr\imc\g=\imc\ \ \right \}
\end{equation}
acts from the right on $\T$ and its Lie algebra plays an important role in our main theorem. 
\begin{equation}
\Lie(\BG)=
\left \{\gg\in \Mat(2\hn+2,\k)\mid \gg \text{ is block upper triangular and  }  \gg^\tr\imc+\imc\gg=0\ \ \right \}.
\end{equation}
Here, by block triangular we mean triangular with respect to the partition 
$2\hn+2=1+\hn+\hn+1$. We have
$$
\dim(\BG)=\frac{3\hn^2+5\hn+4}{2},\ \ \ \dim(\T)=\hn+\dim(\BG). 
$$
Special geometry and period manipulations suggest that 
$\T$ has a canonical structure of an affine variety  over $\bar\Q$
and the action of $\BG$ on $\T$ is algebraic.
We have a universal family  $\X/\T$ of Calabi-Yau threefolds and 
by our construction we have elements $\tilde \omega_i\in H^3(\X/\T)$ such that $\tilde \omega_i$ restricted
to the fiber $\X_\t$ is the chosen $\omega_i\in H^3_\dR(\X_\t)$.  
For simplicity we write $\tilde\omega_i=\omega_i$. Here, $H^3(\X/\T)$ denotes the set of 
global sections of the relative third de Rham cohomology of $\X$ over $\T$.  Furthermore, 
there is an affine variety $\tilde\T$  such that $\T$ is a Zarski open subset of $\tilde \T$, 
the action of $\BG$ on $\T$ extends to $\tilde\T$  and 
the quotient $\tilde\T/\BG$ is a projective variety (and hence compact).  
All the above statements can be verified for instance for mirror quintic Calabi-Yau threefold, see \S\ref{mqc}. 
Since we have now a good understanding of the classical moduli of Calabi-Yau varieties both in complex and algebraic context 
(see respectively Ref.~\cite{Viehweg:1995} and Ref.~\cite{Todorov:2003}), verifying the above statements are not out of reach. 
For the purpose of the present text either assume that
the universal family $\X/\T$ over $\bar\Q$ exists or assume that  $\T$ is the total space of the 
choices of the basis $\omega_i$ over a local patch of the moduli space of $X$. 
By Bogomolov-Tian-Todorov Theorem such a moduli space is smooth. 
We further assume that
in a local patch of moduli space, the universal family of Calabi-Yau threefolds $X$ exists and  no Calabi-Yau threefold $X$ in such a local patch has an isomorphism which 
acts non-identically on $H^3_\dR(X)$.  
In the last case one has to replace all the algebraic notations 
below by their holomorphic counterpart. Let
$$
\nabla: H^3_\dR(\X/\T)\to \Omega^{1}_\T\otimes_{\O_\T} H^3_\dR(\X/\T)\,,
$$
be the algebraic Gauss-Manin connection of the family $\X/\T$ due to Katz-Oda \cite{Katz:1968}, where 
$\O_\T$ is  the $\bar \Q$-algebra of regular  functions on $\T$ and $\Omega^1_\T$ is the $\O_\T$-module of differential $1$-forms in $\T$.  
For any vector field $\Ra$ in $\T$, let $\nabla_\Ra$ be the Gauss-Manin connection 
composed with the vector field $\Ra$. We write $$
\nabla_\Ra\omega=\gm_\Ra\omega,
$$
where $\gm_\Ra$ is a $(2\hn+2)\times (2\hn+2)$ matrix with entries in $\O_\T$ and   $\omega:=[\omega_1,\ldots,\omega_{2\hn+2}]^{\tr}$. 

To state our main theorem we will introduce some physics notation which will be useful in the rest of the paper. We split the notation for the basis $\omega$ in the following way $[\omega_1,\omega_2,\ldots,\omega_{2\hn+2}]=[\alpha_0,\alpha_i,\beta^i,\beta^0]\,, i=1,2,\ldots ,\hn$. The distinction between upper and lower indices here does not yet carry particular meaning. They are chosen such that they are compatible with the physics convention of summing over repeated upper and lower indices. We will write out matrices in terms of their components, 
denoting by an index $i$ the rows and an index $j$ the columns. We further introduce $\delta_i^j$ which is $1$ when $i=j$ and zero otherwise.

\begin{theo}
\label{maintheo}
We have the following
\begin{enumerate}
\item
There are unique vector fields $\Ra_k, \ k=1,2,\ldots,\hn$ in $\T$ and unique 
$\Yuk_{ijk}\in \O_\T,\ \ i,j,k=1,2,\ldots,\hn$ symmetric in $i,j,k$ such that
\begin{equation}
\label{Someday}
\gm_{\Ra_k}=\left(
\begin{array}{*{4}{c}}
0 & \delta^j_k& 0 & 0 \\
0 & 0 & \Yuk_{kij}& 0 \\
0 & 0 & 0 & \delta_k^i \\
0 & 0 & 0 & 0
\end{array} \right), \ \ \ \ \ \ \  
\end{equation} 
Further
\begin{equation}
\label{santana}
\Ra_{i_1}\Yuk_{i_2i_3i_4}=\Ra_{i_2}\Yuk_{i_1i_3i_4}.\ \ \ 
\end{equation}
\item
For any $\gg\in \Lie(\BG)$ there is also a unique vector field $\Ra_{\gg}$  in $\T$ 
such that
\begin{equation}
\gm_{\Ra_\gg}=\gg^{\tr}.
\end{equation}
\end{enumerate}
\end{theo}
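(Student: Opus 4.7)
The plan rests on two structural constraints satisfied by $\gm_\Ra$ for any vector field $\Ra$ on $\T$. First, Griffiths transversality $\nabla F^p\subseteq \Omega^1_\T\otimes F^{p-1}$, combined with the compatibility of the basis with the Hodge filtration, forces $\gm_\Ra$ to vanish on all blocks strictly above the super-diagonal with respect to the partition $2\hn+2 = 1+\hn+\hn+1$. Second, the constancy of $\langle \omega_i,\omega_j\rangle = \imc_{ij}$ on $\T$ gives $\gm_\Ra\imc + \imc\gm_\Ra^\tr = 0$. A block computation with $\imc$ as in \eqref{31aug10} shows that the constraints on the ``on-and-below block diagonal'' part $L$ of $\gm_\Ra$ and on the ``strictly above diagonal'' part $U=\gm_\Ra-L$ are supported on disjoint blocks and thus decouple; moreover $U$ must have super-diagonal blocks $(A,B,C)$ of sizes $1\times\hn$, $\hn\times\hn$, $\hn\times 1$ satisfying $C=A^\tr$ and $B=B^\tr$.

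For Part 2, I would realize $\Ra_\gg$ as the infinitesimal generator of the right action of $\exp(s\gg)\in\BG$ on $\T$. Along this flow the underlying variety is fixed and only the basis is transformed, by $[\omega_1,\ldots,\omega_{2\hn+2}]\mapsto [\omega_1,\ldots,\omega_{2\hn+2}]\exp(s\gg)$; since Gauss-Manin records only the change of de Rham classes, the derivative is purely algebraic and yields $\gm_{\Ra_\gg}=\gg^\tr$. Uniqueness is the injectivity of $\Ra\mapsto \gm_\Ra$: a vector field with trivial Gauss-Manin matrix produces no variation of the Hodge filtration and no change of basis, so by infinitesimal Torelli for Calabi-Yau threefolds (together with the assumption that no enhancement has a nontrivial automorphism) must vanish.

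For Part 1, let $\widetilde \Ra$ be any local lift to $\T$ of a nonzero tangent direction on the $\hn$-dimensional quotient $\T/\BG$. Its on-and-below-diagonal part $L$ satisfies $L\imc+\imc L^\tr = 0$ by the decoupling above, so $L = \gg^\tr$ for a unique $\gg \in \Lie(\BG)\otimes \O_\T$; subtracting the corresponding $\O_\T$-combination of $\Ra_\gg$'s produces a vector field whose Gauss-Manin matrix is purely strictly super-diagonal, determined by $(A,B,C)$ with $C=A^\tr$ and $B=B^\tr$. The induced assignment from the class of a vector field in $T\T/\mathrm{span}\{\Ra_\gg\}$ to its $A$-part is the Kodaira-Spencer isomorphism for Calabi-Yau threefolds, and is hence an iso onto $\O_\T^{\hn}$; consequently for each $k=1,\ldots,\hn$ there is a unique vector field $\Ra_k$ on $\T$ with $\gm_{\Ra_k}$ of the form \eqref{Someday}, the entries $\Yuk_{kij}:=B_{ij}\in\O_\T$ being symmetric in $(i,j)$ by the intersection constraint.

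The technically richest step is the full $(i,j,k)$-symmetry of $\Yuk$ and the integrability identity \eqref{santana}. Both come from flatness of the Gauss-Manin connection, which in matrix form reads
\[
\gm_{[\Ra_i,\Ra_j]} = \Ra_i\gm_{\Ra_j} - \Ra_j\gm_{\Ra_i} + \gm_{\Ra_j}\gm_{\Ra_i} - \gm_{\Ra_i}\gm_{\Ra_j}.
\]
The $(0,2)$-block of the right side evaluates to $\Yuk_{ijc}-\Yuk_{jic}$, while on the left it vanishes by Griffiths transversality, supplying the missing symmetry. The $(0,1)$-block of the right side vanishes identically (the first two terms involve derivatives of constants, and the product terms shift by two filtration steps), so $[\Ra_i,\Ra_j]$ has no $A$-part and hence lies in the $\O_\T$-span of $\{\Ra_\gg\}$; the Gauss-Manin matrices of such combinations are block lower triangular and thus have zero $(1,2)$-block. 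Equating this to the $(1,2)$-block of the right side, which is $\Ra_i\Yuk_{jrc}-\Ra_j\Yuk_{irc}$, delivers precisely \eqref{santana}.
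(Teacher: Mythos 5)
Your proposal is correct, but it reaches the theorem by a genuinely different route than the paper. The paper's proof is constructive: it works on the special locus $\uhp$ where the period matrix has the unipotent form \eqref{specialmatrix}, so that the Gauss--Manin matrix in the flat coordinates $t$ already has the shape \eqref{Someday} with $\Yuk=C_{abc}$ by \eqref{determinantal}; it then moves to a general point of $\T$ via the change of basis $\Am$ of \eqref{muradalim}, whose entries are the special polynomial generators treated as coordinates along the $\BG$-orbits, and reads off $\Ra_k$ and $\Ra_\gg$ explicitly as the derivations of Proposition \ref{28s2014} and \eqref{28.09.2014}. Uniqueness in the paper is obtained by integrating the difference of two candidate fields and showing the integral curve is constant. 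You instead argue intrinsically: Griffiths transversality plus the polarization identity $\gm\imc+\imc\gm^{\tr}=0$ (the paper's \eqref{12May2014}, and your block decoupling of it is correct) constrain the possible shapes of $\gm_\Ra$; the $\BG$-action supplies the $\Ra_\gg$; and local Torelli --- Kodaira--Spencer contracted with $\omega_1$ being an isomorphism onto $\mathrm{Hom}(F^3,F^2/F^3)$ for a Calabi--Yau threefold --- produces and normalizes the $\Ra_k$. Your uniqueness is the infinitesimal form of the paper's integral-curve argument and rests on the same two inputs, local Torelli and the standing no-automorphism (freeness) hypothesis, which you correctly invoke. For the symmetry of $\Yuk$ in the first two indices and for \eqref{santana}, your use of flatness of the Gauss--Manin connection coincides with what the paper does in \S\ref{twoequalitiessection}. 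What your approach buys is independence from the special-coordinate formalism and a cleaner, coordinate-free uniqueness; what it gives up is the explicit identification of $\Ra_k$, $\Ra_\gg$, and $\Yuk_{ijk}$ with the special-geometry data (the Yukawa couplings $C_{abc}$ and the polynomial generators), which is precisely the information the paper needs later for the algebraic anomaly equations.
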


Our proof of Theorem \ref{maintheo} is based on techniques from special geometry which deals with periods of Calabi-Yau varieties,
see for instance Refs.~\cite{Candelas:1990pi,Candelas:1990rm,Strominger:1990pd,Ceresole:review,Alim:2012gq} and  \S\ref{specialgeometry}. 
For particular examples, such as the mirror quintic, one can give an algebraic proof which is merely computational, 
see \S\ref{mqc}. Further partial results in this direction are obtained in Ref.~\cite{Nikdelan14}. The second part of Theorem \ref{maintheo}
can be proved using algebraic methods and this will be discussed in subsequent works. 

The $\O_\T$-module $\liealg$ generated by the 
vector fields 
\begin{equation}
\label{29apr2014}
\Ra_i,\ \   \Ra_{\gg},\ \ i=1,2,\ldots,\hn,\ \ 
,\ \ \gg\in \Lie(\BG)\,,
\end{equation}
 form a
$\dim(\T)$-dimensional Lie algebra with the usual bracket of vector fields.
In the case of enhanced moduli of elliptic curves, one gets the classical Lie algebra 
${\mathfrak s}{\mathfrak l}_2$, see for instance Refs.~\cite{ho07-1, Guillot07}.   
Our main motivation for introducing such vector fields is that they are basic ingredients for an algebraic version of
the Bershadsky-Cecotti-Ooguri-Vafa
holomorphic anomaly equations \cite{Bershadsky:1993ta,Bershadsky:1993cx}. First, we choose a basis of $\Lie(\BG)$:
\begin{equation}
\label{gofLie}
\tgtwo_{ab}:=\left( 
\begin{array}{cccc}
 0 & 0 & 0 & 0 \\
 0 & 0 & 0 & 0 \\
 0& \frac{1}{2} (\delta^{i}_{a} \delta^{j}_{b} + \delta^{i}_{b} \delta^{j}_{a} ) & 0 & 0 \\
 0 & 0 & 0 & 0 \\
\end{array}
\right) \
\tgone_{a}=
\left(
\begin{array}{cccc}
 0 & 0 & 0 & 0 \\
 0 & 0 & 0 & 0 \\
 -\delta^{i}_{a} & 0 & 0 & 0 \\
 0 & \delta_{a}^{j} & 0 & 0 \\
\end{array}
\right)\,
\tgzero:=\left(
\begin{array}{cccc}
 0 & 0 & 0 & 0 \\
 0 & 0 & 0 & 0 \\
 0 & 0 & 0 & 0 \\
 -1 & 0 & 0 & 0 \\
\end{array}
\right)\,
\end{equation}
$$
\kgone_{a}:= \left(
\begin{array}{cccc}
 0 & 0 & 0 & 0 \\
 \delta^{a}_{i} & 0 & 0 & 0 \\
 0 & 0 & 0 & 0 \\
 0 & 0 & \delta^{a}_{j} & 0 \\
\end{array}
\right)\,,  \ \
\ggtwo^{a}_{b} :=
 \left(
\begin{array}{cccc}
 0 & 0 & 0 & 0 \\
 0 & -\delta_{i}^{a} \delta^{j}_{b}   & 0 & 0 \\
 0 & 0 & \delta_{b}^{i} \delta_{j}^{a} & 0 \\
 0 & 0 & 0& 0 \\
\end{array}\right)\,
\ggzero_0:=
\left(
\begin{array}{cccc}
 -1 & 0 & 0 & 0 \\
 0 & 0 & 0 & 0 \\
 0 & 0 & 0 & 0 \\
 0 & 0 & 0& 1 \\
\end{array}\right)\,.
$$
and we call it the canonical basis.  
The Lie algebra structure of $\liealg$ is given by the following table.

{\tiny
\begin{equation}
\label{liebrackettable}
\begin{array}{|c|c|c|c|c|c|c|c|}
\hline
&\Ra_{\ggzero_0} & \Ra_{\ggtwo_{c}^{d}}&\Ra_{\tgtwo_{cd}} & \Ra_{\tgone_{c}} &  \Ra_{\tgzero} &\Ra_{\kgone^{c}}&\Ra_{c} \\

\hline
 \Ra_{\ggzero_0} &0&0&0&-\Ra_{\tgone_{c}}&-2\Ra_{\tgzero}&-\Ra_{\kgone^{c}} &\Ra_{c}\\

\hline
\Ra_{\ggtwo_{b}^{a}}&0&0&-\delta_{c}^{a} \Ra_{\tgtwo_{bd}}- \delta_{d}^{a} \Ra_{\tgtwo_{bc}}&- \delta^{a}_{c} \Ra_{\tgone_{b}}&0&\delta^{c}_{b} \Ra_{\kgone^{a}} &-\delta^{a}_{c} \Ra_{b}\\

\hline
\Ra_{\tgtwo_{ab}} &0&\delta_{a}^{d} \Ra_{\tgtwo_{bc}}+\delta_{b}^{d} \Ra_{\tgtwo_{ac}}&0&0&0&\frac{1}{2} (\delta_{a}^{c} \Ra_{\tgone_{b}} + \delta_{b}^{c} \Ra_{\tgone_{a}} )&-\frac{1}{2} ( \Yuk_{cbd} \Ra_{\ggtwo_{a}^{d}}  +\Yuk_{acd} \Ra_{\ggtwo_{b}^{d}}    )\\

\hline 
\Ra_{\tgone_{a}} &\Ra_{\tgone_{a}} &\delta^{d}_{a} \Ra_{\tgone_{c}}&0&0&0&2 \delta^{c}_{a}  \Ra_{\tgzero}&2 \Ra_{\tgtwo_{ac}} -\Yuk_{acd} \Ra_{\kgone^{d}} \\

\hline 
\Ra_{\tgzero} &2 \Ra_{\tgzero} &0&0&0&0&0&\Ra_{\tgone_{c}}\\

\hline
\Ra_{\kgone^{a}} &\Ra_{\kgone^{a}}&-\delta^{a}_{c} \Ra_{\kgone^{d}}&-\frac{1}{2} (\delta_{c}^{a} \Ra_{\tgone_{d}} + \delta_{d}^{a} \Ra_{\tgone_{c}} )& -2\delta^{a}_{c} \Ra_{\tgzero}&0&0&-\delta_{c}^{a} \Ra_{\ggzero_0} +\Ra_{\ggtwo_{c}^{a}}\\

\hline
\Ra_{a}&-\Ra_{a}&\delta_{a}^{d} \Ra_{c}&
\frac{1}{2} ( \Yuk_{ade} \Ra_{\ggtwo_{c}^{e}}  +\Yuk_{ace} \Ra_{\ggtwo_{d}^{e}}    )&
-2 \Ra_{\tgtwo_{ac}} +\Yuk_{ace} \Ra_{\kgone^{e}} &
-\Ra_{\tgone_{a}}&
\delta_{a}^{c} \Ra_{\ggzero_0} -\Ra_{\ggtwo^{c}_{a}}&
0\\
\hline
\end{array}
\end{equation}
}
The genus one topological string partition function $\Fg_1^\alg$ belong to $\log(\O_\T^*)$, 
where $\O_\T^*$ is the set of invertible regular functions in $\T$,  
and it satisfies the following equations:
\begin{eqnarray}
\label{aaeq1}
\Ra_{\ggzero_0}  \Fg_1^\alg&=& -\frac{1}{2} (3+\frac{\chi}{12}) \,,\\
\Ra_{\ggtwo_{{b}}^{a}}\, \Fg_1^\alg &=& -\frac{1}{2} \delta^{b}_{{a}}\,, \\
\Ra_{\gg}\, \Fg_1^\alg &=&0,\ \ \ \text{ all other $\gg$ of the canonical basis of } \Lie(\BG). 
\end{eqnarray}
Here, $\chi$ is the Euler number of the Calabi-Yau variety $X$. 
The genus $g$ topological string partition function $\Fg_g^\alg\in \O_\T$ turns out to be 
a regular function in $\T$.
The holomorphic anomaly equations in the polynomial formulation of Refs.~\cite{Yamaguchi:2004bt, Alim:2007qj} can be written in terms of vector fields:
\begin{eqnarray}
\label{aaeq}
\Ra_{\tgtwo^{{a}{b}} }{\Fg}_{g}^\alg &=& \frac{1}{2} \sum_{h=1}^{g-1} \Ra_{{a}} {\Fg}_{h}^\alg\, \Ra_{{b}} {\Fg}_{g-h}^\alg + \frac{1}{2} \Ra_{{a}} \Ra_{{b}} {\Fg}_{g-1}^\alg\,, \\ \nonumber
\Ra_{\kgone_{{a}}} {\Fg}_g^\alg &=&0\,,\\ \nonumber
\Ra_{\ggzero_0}  {\Fg}_g^\alg&=&(2g-2) {\Fg}_g^\alg\,, \\ \nonumber
\Ra_{\ggtwo_{{a}}^{{b}}} {\Fg}_g ^\alg&=& 0\,.
\end{eqnarray}
The functions $\Fg_g^\alg$ are not defined uniquely by the algebraic holomorphic anomaly equation as above. 
Let $\Ts$ be the moduli of $(X,\omega_1)$, where $X$ is a Calabi-Yau threefold as above and $\omega_1$ is a holomorphic differential $3$-form on $X$.  We have a canonical projection $\T\to \Ts$ which is obtained by neglecting all $\omega_i$'s except $\omega_1$. It is characterized by the fact that $f\in\O_\Ts$ does not depend on the choice of $\omega_2,\cdots,\omega_{2\hn+2}$. 
We also 
expect  that $\Ts$ has a canonical structure of an affine variety over $\bar\Q$ such that $\T\to \Ts$ is a morphism of 
affine varieties. 
We get a sub-algebra $\O_\Ts$ of $\O_\T$ which is characterized by the following:
\begin{theo}
\label{theo2}
We have 
\begin{equation}
\label{40salnazdik}
\bigcap_{\gg\in \text{ canonical basis }  \gg\not ={\ggzero_0}} \ker(\Ra_{\gg})
=\O_\Ts\,,
\end{equation}
where we regard a vector field in $\T$ as a derivation in $\O_\T$. 
\end{theo}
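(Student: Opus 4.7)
The plan is to interpret the intersection in \eqref{40salnazdik} as the ring of invariants for a connected algebraic subgroup $\BG'\subset\BG$ acting on $\T$, and to identify $\BG'$ with the group whose orbits are the fibers of the forgetful map $\T\to\Ts$. The theorem then reduces to a standard invariant-theoretic statement for connected algebraic group actions.

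First I would use part (2) of Theorem \ref{maintheo} to interpret each $\Ra_{\gg}$ as the infinitesimal generator of the one-parameter subgroup $\{\exp(t\gg)\}\subset\BG$ acting on $\T$: indeed the identity $\gm_{\Ra_{\gg}}=\gg^{\tr}$ records precisely the infinitesimal change of the tautological basis $\omega$ produced by the right $\BG$-action, so $f\in\ker(\Ra_{\gg})$ if and only if $f$ is invariant under $\{\exp(t\gg)\}$. Next, inspection of \eqref{gofLie} shows that $\ggzero_0$ is the unique element of the canonical basis with a non-vanishing $(1,1)$-entry; the other canonical generators all have zero $(1,1)$-entry and together span the Lie subalgebra which integrates to the closed subgroup $\BG'\subset\BG$ cut out by the single equation ``$(1,1)$-entry equals $1$''. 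Because $F^3$ is one-dimensional and the $(1,1)$-entry of $\g\in\BG$ records its action on $F^3$, this subgroup $\BG'$ is precisely the stabilizer of $\omega_1$ in $\BG$. From the explicit form $\g^{\tr}\Phi\g=\Phi$ one reads off that $\BG'$ is a successive extension of $\GL_{\hn}$ by affine spaces, hence connected.

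Next I would identify $\Ts$ with the geometric quotient $\T/\BG'$. The enhanced moduli space $\T$ is a $\BG$-torsor over the classical moduli of Calabi-Yau threefolds (or over a local patch of it), since by the very definition of an enhanced basis any two enhanced bases on the same $X$ differ by a unique element of $\BG$. Fixing $\omega_1$ cuts this torsor down to a $\BG'$-torsor, so the fibers of $\T\to\Ts$ coincide with the $\BG'$-orbits and $\O_\Ts=\O_\T^{\BG'}$. Finally, since $\BG'$ is a connected algebraic group acting algebraically on $\T$, a function $f\in\O_\T$ is $\BG'$-invariant if and only if it is annihilated by every element of $\Lie(\BG')$; by the first step this is equivalent to $\Ra_{\gg}f=0$ for every $\gg$ in the canonical basis other than $\ggzero_0$, establishing \eqref{40salnazdik}.

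The main obstacle is the identification $\O_\Ts=\O_\T^{\BG'}$, which rests on the $\BG$-torsor property of $\T$ over classical moduli. This is a foundational fact from the setup of Refs.~\cite{ho18,ho22} discussed before Theorem \ref{maintheo}, verified explicitly for the mirror quintic in \S\ref{mqc}, and taken as part of the working assumptions in general. Once it is granted, both the identification of $\BG'$ with the stabilizer of $\omega_1$ and the passage from infinitesimal to global invariance for the connected group $\BG'$ are routine verifications.
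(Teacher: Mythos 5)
Your argument is correct, but it takes a genuinely different route from the paper's. The paper proves Theorem \ref{theo2} by direct computation in the coordinates supplied by the special polynomial generators: using \eqref{18.sepetember.2014} and \eqref{18.09.2014} (equivalently \eqref{28.09.2014}), it observes that the vector fields $\Ra_{\gg}$ for $\gg$ in the canonical basis with $\gg\neq\ggzero_0$ span the same $\O_\T$-module as the coordinate derivations $\partial/\partial T^{ab}$, $\partial/\partial T^{a}$, $\partial/\partial T$, $\partial/\partial L_a$, $\partial/\partial g_m^a$, so a function in the common kernel depends only on $z_i$ and $g_0$, i.e.\ only on the pair $(X,\omega_1)$, which is exactly $\O_\Ts$. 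You instead integrate the infinitesimal picture: you identify the $\Ra_\gg$ as the fundamental vector fields of the right $\BG$-action (legitimately, via the uniqueness in part (2) of Theorem \ref{maintheo}, since the orbit vector field of $\exp(t\gg)$ also has connection matrix $\gg^\tr$), note that the canonical basis elements other than $\ggzero_0$ span the Lie algebra of the connected stabilizer $\BG'$ of $\omega_1$ (the $(1,1)$-entry criterion is right, and closure under bracket is visible in Table \eqref{liebrackettable}), and then invoke $\O_\Ts=\O_\T^{\BG'}$ together with the equivalence of infinitesimal and global invariance for connected groups in characteristic zero. Your version is more conceptual and makes transparent the paper's closing remark that $\bigcap_{\gg\in\Lie(\BG)}\ker(\Ra_\gg)$ consists of the $\BG$-invariant functions; its cost is that it leans on the global foundational hypotheses (algebraicity of the $\BG$-action and the torsor property of $\T$ over the classical moduli), which the paper assumes but does not prove in general, whereas the paper's computation is self-contained once the explicit formulas \eqref{18.09.2014} are granted. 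Both arguments also use, implicitly or explicitly, that $\gg\mapsto\Ra_\gg$ is linear (again by uniqueness), so that killing the canonical basis elements is the same as killing all of $\Lie(\BG')$; you may want to state that small point explicitly.
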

This means that $\Fg_g^\alg,\ \ g\geq 2$ (resp. $\Fg_1^\alg$) is defined up to addition of an element of $\O_\Ts$ (resp. $\log (\O_\Ts^*)$), 
which is called the ambiguity of $\Fg_g^\alg$. The algebra $\O_\T$ can be considered as a generalization of the classical algebra
of quasi-modular forms. For a discussion of the $q$-expansion of its elements see Refs.~\cite{Yamaguchi:2004bt,Alim:2007qj, ho21}. The results are based on considerable amount of machine and hand computations which are suppressed in this paper to enhance readability.

The text is organized in the following way.
In \S\ref{physicssection} we review basic facts about special geometry, the original BCOV holomorphic anomaly equation, the polynomial structure of
topological string partition functions. New manipulations of periods inspired by our geometric approach are explained in \S\ref{16sep2014} and \S\ref{16sep201-1}.
\S\ref{proofssection} is dedicated to the proofs of our main theorems.
In \S\ref{GPDsection} we first recall the definition of a generalized period domain for Calabi-Yau threefolds.  Via the generalized period maps, we interpret the polynomial generators
and topological string partition functions as functions on the moduli space $\T$. 
The mathematical content of the period manipulations of special geometry are explained in \S\ref{twoequalitiessection}.  Explicit computations of the
vector fields \eqref{29apr2014} and the construction of the moduli space $\T$ in the case of mirror quintic is explained in \S\ref{mqc}. Finally, in \S\ref{remarkssection}
we review some works for future and possible applications of our algebraic anomaly equation. 

\subsection*{Acknowledgements}
We would like to thank Jie Zhou for discussions and for collaborations on related projects. H.M. would like to acknowledge the generous support he received from Brazilian science without border program in order to stay for a sabbatical year at Harvard University. E.S. would like to thank the mathematics department at Harvard University for support and hospitality. This work has been supported by NSF grants PHY-1306313 and DMS-0804454.

\section{Holomorphic anomaly equations}
\label{physicssection}
In this section we review some basic formulas used in special geometry of Calabi-Yau threefolds.
We use physics conventions of writing out the components of geometric objects and for handling indices. In general (lower) upper indices will denote components of (co-) tangent space. Identical lower and upper indices are summed over, i.~e.~$x^iy_i := \sum_i x^i y_i$.  For derivatives w.r.t. coordinates $x^i$ we will write $\partial_{i}:= \frac{\partial}{\partial x^i}$ and 
$\partial_{\bar\imath}:= \frac{\partial}{\partial \bar x^{\ib}}$. The inverse of a matrix $[M_{ij}]$ is denoted by $[M^{ij}]$. We define  $\delta^i_j$ to be $1$ if $i=j$ and $0$ otherwise. 

\subsection{Special geometry}
\label{specialgeometry}
By Bogomolov-Tian-Todorov the moduli space $\mathcal{M}$ of projective Calabi-Yau threefolds is smooth and hence we can take local coordinates 
$z=(z^1,z^2,\ldots,z^\hn)\in (\C^\hn,0)$ for an open set $U$ in such a moduli space. In our context, $\mathcal{M}$ is just the quotient of $\T$ by
the action of the algebraic group $\BG$.
We denote by $\Omega=\Omega_z$
a holomorphic family of $3$-forms on the Calabi-Yau threefold $X_z$.
The geometry of $\mathcal{M}$ can be best described using the third cohomology bundle 
$\mathcal{H}\rightarrow \mathcal{M}$, where the fiber of $\mathcal H$
at  a point $z\in\mathcal M$ is $\mathcal{H}_z=H^3(X_z,\C)$.  
This bundle can be decomposed into sub-bundles in the following way:
\begin{equation}
\mathcal{H}= \mathcal{L} \oplus \left(\mathcal{L}\otimes \,T\mathcal{M}\right)  \oplus \overline{\left(\mathcal{L}\otimes \,T\mathcal{M}\right)}  \oplus \overline{\mathcal{L}}\,,
\end{equation}
where $\mathcal{L}$ is the line bundle  of holomorphic $(3,0)$ forms in $X_z$, 
$T\mathcal{M}$ denotes the holomorphic tangent bundle and the overline denotes complex conjugation. 
This structure gives the Hodge decomposition of the variation of 
Hodge structure arising from the Calabi-Yau threefolds $X_z$. It carries the intersection form
in cohomology 
$$
\langle\cdot,\cdot \rangle: \mathcal{H}\times \mathcal{H}\to\C \,,
$$
which is given by the same formula as in \eqref{16s2014}. Let
$$
\partial_i=\partial/\partial z^i\,, \partial_{\bar{\jmath}}=\partial/\partial \bar{z}^{\jmath}
$$
These are sections of $T\mathcal{M}$ and $\Omega$ is a section of $\mathcal L$.
We get in canonical way global sections of the (symmetric) tensor product of $\mathcal L$, $T\mathcal{M}$
and $T^*\mathcal{M}$ which form a basis at each fiber. Any  other section can be written
as a linear combination of such a basis and we treat such coefficients as if they are sections themselves.
Let
$$
K:=-\log \langle \Omega, \overline{\Omega}\rangle\,,
$$
be the K\"ahler potential. It provides a K\"ahler form for a K\"ahler metric on $\mathcal{M}$, whose components and Levi-Civita connection are given by:
\begin{equation}
G_{i\bar{\jmath}} := \partial_i \partial_{\bar{\jmath}} K\,, \quad \Gamma_{ij}^k= G^{k\bar{k}} \partial_i G_{j\bar{k}}\,.
\end{equation}
The description of the change of the decomposition of $\mathcal{H}$ into sub-bundles is captured by the holomorphic Yukawa couplings or threepoint functions
\begin{equation}\label{Yuk}
C_{ijk}:=-\langle \Omega, \partial_{i}\partial_{j}\partial_{k}\Omega \rangle \in \Gamma\left( \mathcal{L}^2 \otimes \textrm{Sym}^3 T^*\mathcal{M}\right)\,,
\end{equation}
which satisfy
\begin{equation}\label{Yukeq}
\partial_{\bar{\imath}} C_{ijk}=0\,, \quad D_i C_{jkl}= D_j C_{ikl},
\end{equation}
the curvature is then expressed as \cite{Bershadsky:1993cx}:
 \begin{equation}
 [\bar{\partial}_{\bar{\imath}},D_i]^l_{\phantom{l}j}=\bar{\partial}_{\bar{\imath}} \Gamma^l_{ij}= \delta_i^l
G_{j\bar{\imath}} + \delta_j^l G_{i\bar{\imath}} - C_{ijk} \overline{C}^{kl}_{\bar{\imath}},
\label{curvature}
 \end{equation}
where
\begin{equation}
\overline{C}_{\bar{\imath}}^{jk}:= e^{2K} G^{k\bar{k}} G^{j\bar{\jmath}}\overline{C}_{\bar{\imath}\bar{k}\bar{\jmath}}\,,
\end{equation}
and where $D_i$ is the covariant derivative defined using the connections 
$\Gamma_{ij}^k$ and $K_i$, for example for $A_j^k$ a section of 
$\mathcal{L}^{n} \otimes T^*\mathcal{M} \otimes T\mathcal{M}$ we have
$$
D_i A_j^k:= \partial_i\, A_j^k -\Gamma_{ij}^m A_m^k + \Gamma_{im}^k A^m_j +  n\, K_i A_j^k\,.
$$
We further introduce the objects $S^{ij},S^i,S$, which are sections of $\mathcal{L}^{-2}\otimes \text{Sym}^m T\mathcal{M}$ with $m=2,1,0$, respectively, and give local potentials for the non-holomorphic Yukawa couplings:
\begin{equation}
\partial_{\bar{\imath}} S^{ij}= \overline{C}_{\bar{\imath}}^{ij}, \qquad
\partial_{\bar{\imath}} S^j = G_{i\bar{\imath}} S^{ij}, \qquad
\partial_{\bar{\imath}} S = G_{i \bar{\imath}} S^i.
\label{prop}
\end{equation}

%


\subsection{Special coordinates}
\label{specialcoordinates}
We pick a symplectic basis $\{ A^I, B_J\}\,, I,J=0,\dots,\hn$ of $H_{3}(X_z,\Z)$, satisfying
\begin{equation}\label{sympbasis}
A^I\cdot B_J=\delta_I^J\,,\quad A^I\cdot A^J= 0\,, \quad B_I\cdot B_J=0 \,,
\end{equation}
We write the periods of the holomorphic $(3,0)$ form over this basis:
\begin{equation}
X^I(z):=\int_ {A^I}\Omega_z,\ \  \mathcal{F}_J(z):=\int_{B_J}\Omega_z\, .
\end{equation}
The periods $X^I(z),\mathcal{F}_J(z)$ satisfy the Picard-Fuchs equations of the Calabi-Yau family $X_z$. 
The periods $X^I$ can be identified with projective coordinates on $\mathcal{M}$ and 
$\mathcal{F}_J$ with derivatives of a homogeneous function $\mathcal{F}(X^I)$ of weight 
2 such that $\mathcal{F}_J=\frac{\partial \mathcal{F}(X^I)}{\partial X^J}$. 
In a patch where $X^0(z)\ne 0$ a set of special coordinates can be defined
\begin{equation} \label{special}
t^a=\frac{X^a}{X^0}\, ,\quad a=1,\dots ,\hn.
\end{equation}
The normalized holomorphic $(3,0)$ form $ \tilde{\Omega}_t :=(X^0)^{-1} \Omega_z$ has the periods:
\begin{equation}
\label{cmsa2014}
\int_{ A^0, A^a, B_b, B_0}\tilde \Omega_t= \left (1,t^a, F_b(t), 2F_0(t)-t^c F_c(t) \right) \,,
\end{equation}
where $$F_0(t)= (X^0)^{-2} \mathcal{F} \quad \textrm{and} \quad F_a(t):=\partial_a F_0(t)=\frac{\partial F_0(t)}{\partial t^a}.$$
$F_0(t)$ is the called the prepotential and 
\begin{equation}
C_{abc}=\partial_a \partial_b \partial_c F_0(t)\,.
\end{equation}
are the Yukawa coupling in the special coordinates $t^a$. See Ref.~\cite{Ceresole:review,Alim:2012gq} for more details.


\subsection{Holomorphic anomaly equations}
\label{BCOVHAE}
The genus $g$ topological string amplitude $\Fg^\non_g$  are defined in Ref.~\cite{Bershadsky:1993ta} 
for $g=1$ and Ref.~\cite{Bershadsky:1993cx} for $g\geq 2$.  It is a section of the line bundle
$\mathcal{L}^{2-2g}$ over $\mathcal M$.
They are related recursively in $g$ by 
the holomorphic anomaly equations \cite{Bershadsky:1993ta}
\begin{equation}
\bar{\partial}_{\bar{\imath}} \partial_j\Fg^{\non}_1 = \frac{1}{2} C_{jkl}
\overline{C}^{kl}_{\bar{\imath}}+ (1+\frac{\chi}{24})
G_{j \bar{\imath}}\,, \label{anom2}
\end{equation}
where $\chi$ is the Euler character of B-model CY threefold, and \cite{Bershadsky:1993cx}
\begin{equation}
\bar{\partial}_{\bar{\imath}} \Fg^{\non}_g = \frac{1}{2} \overline{C}_{\bar{\imath}}^{jk} \left(
\sum_{r=1}^{g-1}
D_j\Fg^\non_{r} D_k\Fg^\non_{(g-r)} +
D_jD_k\Fg_{g-1}^{\non} \right) \label{anom1}.
\end{equation}
Note that $D_i\Fg_g^{\non}$ is a section of ${\mathcal L}^{2-2g}\otimes T^*{\mathcal M} $.

\subsection{Polynomial structure}
\label{17september}
In Ref.~\cite{Yamaguchi:2004bt} it was shown that the topological string amplitudes for the mirror quintic can be expressed as polynomials in 
finitely many generators of differential ring of multi-derivatives of the connections of special geometry. 
This construction was generalized in Ref.~\cite{Alim:2007qj} for any CY threefold. It was shown there that 
$\Fg_{g}^\non$ is a polynomial of degree $3g-3$ in the generators $S^{ij},S^i,S,K_i$, 
where degrees $1,2,3,1$ were assigned to these generators respectively. The proof was given inductively and relies on the closure of these generators under the holomorphic derivative \cite{Alim:2007qj}. The purely holomorphic part of the construction as well as the coefficients of the monomials would be rational functions in the algebraic moduli, this was further discussed in Refs.~\cite{Alim:2008kp,Hosono:2008np}.

It was further shown in Ref.~\cite{Alim:2007qj}, following Ref.~\cite{Yamaguchi:2004bt}, that the L.H.S. of Eq.~(\ref{anom1}) could 
be written in terms of the generators using the chain rule:
\begin{equation}
\label{25sep2014}
\partial_{\ib} \Fg_g^{\non} = \overline{C}_{\ib}^{jk} 
\frac{\partial \Fg_g^{\non}}{\partial S^{jk}} + G_{i\ib} \left( S^{ij} \frac{\partial \Fg_g^{\non}}{\partial S^{j}} + 
S^i \frac{\partial \Fg_g^{\non}}{\partial S} +\frac{\partial \Fg_g^{\non}}{\partial K_i}\right) \,,
\end{equation}
assuming the linear independence of $\overline{C}_{\ib}^{jk}$ and $G_{i\ib}$ 
over the field generated by the generators in Definition \ref{specgen},
the holomorphic anomaly equations (\ref{anom1}) could then be written as two different sets of equations \cite{Alim:2007qj}:
\begin{eqnarray}\label{anompol1}
\frac{\partial \Fg^{\non}_g}{\partial S^{jk}} = \frac{1}{2}
\sum_{r=1}^{g-1}
D_j\Fg^\non_{r} D_k\Fg^\non_{(g-r)} +\frac{1}{2}
D_jD_k\Fg^\non_{(g-1)}  , \\ \label{anompol2}
 S^{ij} \frac{\partial \Fg_g^{\non}}{\partial S^{j}} + S^i \frac{\partial \Fg_g^{\non}}{\partial S} +\frac{\partial \Fg_g^{\non}}{\partial K_i}=0\,. 
\end{eqnarray}
This linear independence assumption is not at all a trivial statement. Its proof in the one parameter case can be done using a differential Galois  theory argument as in the proof of Theorem 2 in \cite{ho21}  and one might try to generalize such an argument to multi parameter case. However, for the purpose of the present
text we do not need to prove it. The reason is that \eqref{anompol1} and \eqref{anompol2} always have a solution which gives a solution to (\ref{25sep2014}).


\section{Algebraic structure of topological string theory}
In this section we develop the new ingredients and tools which will allow us to phrase the algebraic structure of topological string theory. We start by enhancing the differential polynomial ring of Ref.~\cite{Alim:2007qj} with further generators which parameterize a choice of section of the line bundle $\mathcal{L}$ and a choice of coordinates as was done in Ref.~\cite{Alim:2013eja} for one dimensional moduli spaces. We will then show that these new generators parameterize different choices of forms compatible with the Hodge filtration and having constant symplectic pairing. 

\subsection{Special polynomial rings}
We first fix the notion of holomorphic limit discussed in Ref.~\cite{Bershadsky:1993cx}. For our purposes we think of the limit as an assignment:
\begin{equation}
e^{-K}|_{\textrm{hol}} = \mathsf{h}_0 \, X^0 \,, \quad G_{i\jb}|_{\textrm{hol}}= \mathsf{h}_{a\jb} \,\frac{\partial t^a}{\partial z^i}
\end{equation}
for a given choice of section $X^0$ of $\mathcal{L}$ and a choice of special coordinates $t^a$\, where $\mathsf{h}_0$ is a constant and $\mathsf{h}_{a\ib}$ denote the components of a constant matrix.

\begin{defi}\label{specgen}
The generators of the special polynomial differential ring are defined by
\begin{align}
g_0 &:=\mathsf{h}_0^{-1} e^{-K} \,, \\
 g^a_i& :=e^{-K} G_{i\jb}\, \mathsf{h}^{\jb a} \,,\\
T^{ab} & := g^a_i \, g^b_j S^{ij}  \,,\\
T^a & :=  g_0 \, g^a_i (S^i-S^{ij}K_j)\,,\\
T & := g_0^2 (S-S^i K_i + \frac{1}{2} S^{ij} K_i K_j)\,,\\
L_a &= g_0 (g^{-1})^{i}_a \partial_i K\,.
\end{align}
We will use the same notation for these generators and for their holomorphic limit. 
\end{defi}

\begin{prop}
\label{28s2014}
The generators of the special polynomial ring satisfy the following differential equations, called the differential ring:
\begin{align}
 \partial_a g_0 &= -L_a \, g_0 \,, \\ 
\partial_a g_i^b &= g_i^c\left( \delta^{b}_a\,L_c - C_{cad} T^{bd} + g_0\, s_{ca}^b\right)\,,\\
\partial_a T^{bc} & = \delta_{a}^{b} (T^c + T^{cd} L_d)+  \delta_{a}^{c} (T^b + T^{bd} L_d) - C_{ade} T^{bd} T^{ce} + g_0\, h_{a}^{bc}\, ,\\
\partial_a T^b &=  2 \delta^{b}_a (T + T^c L_c) -T^b\, L_a - k_{ac} T^{bc} + g_0^2\, h^b_a\, ,\\
\partial_a T & =  \frac{1}{2} C_{abc} T^b T^c - 2 L_a T- k_{ab} T^b + g_0^3\, h_a\, , \\
\partial_a L_b & =  -L_a L_b -C_{abc} T^c +  g_0^{-2}\, k_{ab}\,.
\end{align}
\end{prop}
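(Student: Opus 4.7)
The plan is to derive each identity by direct computation in the holomorphic limit, translating between the algebraic moduli coordinates $z^i$ and the special coordinates $t^a$ via the Jacobian packaged in $g^b_i$, and then rescaling the Alim-L\"ange differential ring for $S^{ij},S^i,S,K_i$ from \cite{Alim:2007qj} into the new generators. I treat $\partial_a = \partial/\partial t^a$ throughout.

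First I would verify the two ``Jacobian-like'' equations. For $\partial_a g_0 = -L_a\, g_0$, one unpacks $g_0 = \mathsf{h}_0^{-1}e^{-K}$ in the holomorphic limit, observes that $(g^{-1})^i_a \partial_i K$ is, up to the constant $\mathsf{h}_0$, exactly $\partial_a K$ in the new coordinates, and uses the definition of $L_a$. For $\partial_a g^b_i$, I differentiate $g^b_i = e^{-K} G_{i\bar\jmath}\mathsf{h}^{\bar\jmath b}$ by the Leibniz rule: the $\partial e^{-K}$ contribution produces the $\delta^b_a L_c\, g^c_i$ piece, while $\partial_a G_{i\bar\jmath}$ is controlled by the curvature identity (\ref{curvature}). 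Its $C_{ijk}\overline{C}^{kl}_{\bar\imath}$ term becomes $-C_{cad}T^{bd}\, g^c_i$ after expressing $\overline{C}$ through $\partial_{\bar\imath}S^{ij}$ via (\ref{prop}) and repackaging $S^{ij}$ into $T^{bd}$ through $T^{bd}=g^b_i g^d_j S^{ij}$; the remaining holomorphic part of $\Gamma$ is absorbed into $g_0\, s^b_{ca}$, which is the appropriate holomorphic ambiguity.

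Next I would handle the equations for the propagator-like generators $T^{bc},T^b,T$ by a uniform procedure: differentiate the defining formula by Leibniz, substitute the holomorphic-limit differential equations of \cite{Alim:2007qj} for $S^{ij},S^i,S$ (which already involve $C_{ijk}$, the $S$-propagators, and holomorphic ambiguities), and reassemble using $T^{ab}=g^a_i g^b_j S^{ij}$, the definitions of $T^a,T,L_a$, and the just-proven equation for $\partial_a g^b_i$. The combinations of factors of $g_0$ and $g^b_i$ appearing on each right-hand side are forced by covariance under the rescaling and reparametrization groups, and the inhomogeneous terms $g_0 h^{bc}_a$, $g_0^2 h^b_a$, $g_0^3 h_a$ are exactly the rescaled Alim-L\"ange ambiguities. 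Finally, $\partial_a L_b$ is obtained by differentiating $L_b = g_0 (g^{-1})^i_b \partial_i K$ using the previous two equations, together with (\ref{curvature}) in the holomorphic limit to control $\partial_a\partial_b K$; this produces the $-L_a L_b$ and $-C_{abc} T^c$ terms, and $g_0^{-2} k_{ab}$ then absorbs the last holomorphic ambiguity coming from (\ref{prop}).

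The main obstacle is careful bookkeeping: one must track (i) the two coordinate systems and the Jacobian $g^a_i$ between them, (ii) the line-bundle twist carried by $g_0$, and (iii) the fact that closure of the ring holds only modulo the holomorphic ambiguities $s,h,k$. A useful cross-check, which I would use to catch any sign or index mistake, is the grading assigned to each generator by the Lie algebra of (\ref{gofLie}): both sides of each identity must carry matching weights under $\Ra_{\ggzero_0}$ and $\Ra_{\ggtwo^a_b}$. Once the bookkeeping is organized this way, the proposition reduces to routine but lengthy substitution, transporting the differential ring structure already present in \cite{Alim:2007qj} through the $g_0, g^b_i$-change of frame.
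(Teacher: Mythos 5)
Your proposal is correct and follows essentially the same route as the paper's (very terse) proof: the first two identities from Definition \ref{specgen} together with the special-geometry relations \eqref{curvature} and \eqref{prop}, and the remaining four by differentiating the defining formulas and transporting the Alim-L\"ange differential ring for $S^{ij},S^i,S,K_i$ through the $g_0,g^b_i$ change of frame, with $s,h,k$ absorbing the holomorphic ambiguities. The weight cross-check under $\Ra_{\ggzero_0}$ and $\Ra_{\ggtwo^a_b}$ is a sensible addition but not part of the paper's argument.
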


\begin{proof}
The first two equations follow from  Definition \ref{specgen} and the special geometry discussed in Sec.~\ref{specialgeometry}, the other equations follow from the definitions and the equations which were proved in Ref.~\cite{Alim:2007qj}.
\end{proof}
The generators $g_0,g_i^a$ are chosen such that their holomorphic limits become:
\begin{equation}
g_0|_{\textrm{hol}} = X^0\,,\quad g_{i}^a|_{\textrm{hol}}= X^0\,\frac{\partial t^a}{\partial z^i}\,, 
\end{equation}
In these equations the functions $s_{ij}^k, h_i^{jk},h_i^j,h_i$ and $k_{ij}$ are fixed once a choice of generators has been made and we transformed the indices from arbitrary algebraic coordinates to the special coordinates using $g_i^a$ and its inverse.

The freedom in choosing the generators $S^{ij},S^i,S$ was discussed in Ref.~\cite{Alim:2008kp,Hosono:2008np} and translates here to a freedom of adding holomorphic sections $\mathcal{E}^{ij},\mathcal{E}^i,\mathcal{E}$ of $\mathcal{L}^{-2}\otimes \text{Sym}^m T\mathcal{M}$ with $m=2,1,0$, respectively to the generators as follows:
\begin{eqnarray}
T^{ab} &\rightarrow& T^{ab} + g^{a}_i \, g_j^b\, \mathcal{E}^{ij}\,,\\
T^a &\rightarrow& T^a + g_0\, g^a_i \mathcal{E}^i \,,\\
T &\rightarrow& T + g_0^2 \mathcal{E}\,.
\end{eqnarray}
It can be seen from the equations that there is additional freedom in defining the generators $g_0,g_i^a$ and $L_a$ given by:
\begin{eqnarray}
L_a &\rightarrow& L_{a} + g_0 (g^{-1})_{a}^i  \mathcal{E}_i\,,\\
g_0 &\rightarrow & \mathcal{C} g_0\, ,\\
g_i^a & \rightarrow & \mathcal{C}^i_j g_i^a\,,
\end{eqnarray}
where $\mathcal{C}$ denotes a holomorphic function, $\mathcal{C}_{j}^i$ a holomorphic section of $T\mathcal{M} \otimes T^*\mathcal{M}$ and $\mathcal{E}_i$ a holomorphic section of $T^*\mathcal{M}$.

The number of special polynomial generators matches $\dim(\BG)$, where $\BG$
is the algebraic group in the Introduction. 


\begin{defi}
We introduce:
\begin{equation}
\tilde\Fg^{\non}_g := g_0^{2g-2} \Fg_g^{\non}\,,
\end{equation}
which defines a section of $\overline{\mathcal{L}}^{2g-2}$. 
After taking the holomorphic limit discussed earlier we get $\Fg_g^\hol$ 
which will be a holomorphic function (and no longer a section) on the moduli space $\mathcal{M}$.
\end{defi}

\begin{prop}\label{anomsplit}
$\tilde\Fg^{\non}_g$'s satisfy the following equations:

\begin{eqnarray}
\left( g_0 \frac{\partial}{\partial g_0}   + L_a \frac{\partial}{\partial L_a}  + T^a \frac{\partial}{\partial T^a} + 2 T\frac{\partial}{\partial T}\right) \tilde\Fg_{g}^{\non} = (2g-2) \tilde\Fg_{g}^{\non} \, \\ 
\left( g^a_m \frac{\partial}{\partial g_m^b} + 2 T^{ac} \frac{\partial}{\partial T^{bc}} + T^a \frac{\partial}{\partial T^b} - L_b \frac{\partial}{\partial L_a} \right) \tilde\Fg_{g}^{\non} = 0\,, \\
\left(\frac{\partial}{\partial T^{ab}} -\frac{1}{2}(L_b \frac{\partial}{\partial T^a}+ L_a \frac{\partial}{\partial T^b}) +\frac{1}{2} L_a L_b \frac{\partial}{\partial T} \right) \tilde\Fg^{\non}_g \nonumber\\=\frac{1}{2} \sum_{r=1}^{g-1} \partial_a \tilde\Fg^{\non}_r\, \partial_b \tilde\Fg^{\non}_{g-r} + \frac{1}{2} \partial_a \partial_b \tilde\Fg^{\non}_{g-1} \,,\\
\frac{\partial \tilde\Fg^{\non}_g}{\partial L_a} = 0\,.
\end{eqnarray}
$\Fg_g^\hol$'s satisfy the same equations in the holomorphic limit. 
\end{prop}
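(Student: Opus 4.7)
My plan is to exploit the polynomial structure of Ref.~\cite{Yamaguchi:2004bt, Alim:2007qj}, by which $\Fg_g^{\non}$ is a polynomial of degree $3g-3$ in the generators $S^{ij},S^i,S,K_i$ whose coefficients are rational functions in the algebraic moduli. Inverting the defining relations of Definition \ref{specgen} gives
\[
S^{ij}=(g^{-1})^i_a(g^{-1})^j_b T^{ab},\quad K_i=g_0^{-1}g^a_i L_a,\quad S^i-S^{ij}K_j=g_0^{-1}(g^{-1})^i_a T^a,\quad S-S^i K_i+\tfrac12 S^{ij}K_i K_j=g_0^{-2}T,
\]
so that $\tilde\Fg^{\non}_g=g_0^{2g-2}\Fg_g^{\non}$ becomes a polynomial in $T^{ab},T^a,T,L_a,g_0,g^a_i$. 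Each of the four equations then follows by an elementary chain-rule computation applied to this polynomial, fed by the polynomial holomorphic anomaly equations (\ref{anompol1})--(\ref{anompol2}).

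The fourth equation $\partial\tilde\Fg^{\non}_g/\partial L_a=0$ is the simplest. A direct computation shows that varying $L_a$ at fixed $T^{bc},T^b,T,g_0,g^b_i$ induces, in the old variables, $\delta K_j=g_0^{-1}g^a_j\,\delta L_a$, $\delta S^i=S^{ij}\delta K_j$, $\delta S=S^i\delta K_i$, and $\delta S^{ij}=0$. The chain rule then gives $\partial\tilde\Fg^{\non}_g/\partial L_a=g_0^{2g-3}g^a_i$ times the left-hand side of (\ref{anompol2}), which vanishes.

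Equations 1 and 2 are homogeneity identities that I would recognise by identifying the corresponding operators with generators of one-parameter transformations which preserve the old variables $S^{ij},S^i,S,K_i$. For Equation 1, the scaling $(g_0,L_a,T^a,T)\mapsto(\lambda g_0,\lambda L_a,\lambda T^a,\lambda^2 T)$ with $g^a_i$ and $T^{ab}$ fixed leaves each of $S^{ij},S^i,S,K_i$ invariant by the inversion formulas above, so $\Fg_g^{\non}$ is invariant and $\tilde\Fg^{\non}_g$ picks up exactly the factor $\lambda^{2g-2}$; differentiating at $\lambda=1$ yields the Euler identity. For Equation 2, the infinitesimal frame change $\delta g^a_i=\epsilon M^a_b g^b_i$ together with the induced transformations $\delta T^{bc}=\epsilon(M^b_d T^{dc}+M^c_d T^{bd})$, $\delta T^a=\epsilon M^a_b T^b$, $\delta L_a=-\epsilon M^b_a L_b$ again leaves $S^{ij},S^i,S,K_i$ (and $g_0,T$) invariant, so $\tilde\Fg^{\non}_g$ is invariant; specialising $M$ to an elementary matrix recovers Equation 2.

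Equation 3 is the substantive step. Varying $T^{ab}$ at fixed $T^c,T,L_c,g_0,g^c_i$ induces variations of $S^{ij},S^i,S$ with $K_j$ held fixed, and the shift operators $-\frac{1}{2}(L_b\partial/\partial T^a+L_a\partial/\partial T^b)+\frac{1}{2}L_a L_b\,\partial/\partial T$ on the left-hand side are precisely what is needed so that the combined operator coincides with $g_0^{-2}(g^{-1})^i_a(g^{-1})^j_b\,\partial/\partial S^{ij}$ taken at fixed $S^i,S,K_i$. Substituting (\ref{anompol1}) and using the differential identities of Proposition \ref{28s2014} to convert the covariant derivatives $D_j\Fg_r^{\non}$ and $D_jD_k\Fg_{g-1}^{\non}$ of sections of $\mathcal L^{2-2r}$ into plain partials $\partial_a\tilde\Fg^{\non}_r$ and $\partial_a\partial_b\tilde\Fg^{\non}_{g-1}$ in the special coordinates $t^a$ completes the derivation. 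The holomorphic-limit version is then automatic because all identities are polynomial in the generators and survive the limit of Ref.~\cite{Bershadsky:1993cx}. I expect the main obstacle to be the index bookkeeping in this last step, particularly the reconciliation of $D_jD_k\Fg_{g-1}^{\non}$ with $\partial_a\partial_b\tilde\Fg^{\non}_{g-1}$, where careful use of the $\partial_a g^b_i$ and $\partial_a g_0$ equations in Proposition \ref{28s2014} is needed to absorb all the Christoffel and $K_i$ contributions into the plain partial $\partial_a$.
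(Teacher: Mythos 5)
Your proposal is correct and takes essentially the same route as the paper, whose proof simply notes that the first two equations follow from the definition of $\tilde\Fg_g^{\non}$ together with the results of Ref.~\cite{Alim:2007qj}, and that the last two are a rewriting of (\ref{anompol1})--(\ref{anompol2}) in the special polynomial generators; your write-up just fills in the chain-rule and homogeneity details. One minor slip: the combined operator in your Equation 3 equals $(g^{-1})^i_a(g^{-1})^j_b\,\partial/\partial S^{ij}$ with \emph{no} $g_0^{-2}$ prefactor, since $S^{ij}=(g^{-1})^i_a(g^{-1})^j_b T^{ab}$ involves no power of $g_0$; the powers of $g_0$ then balance correctly against $(g^{-1})^i_a D_i\Fg^{\non}_r = g_0^{1-2r}\,\partial_a\tilde\Fg^{\non}_r$ on the right-hand side.
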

\begin{proof}
The first two equations follow from the definition of $\tilde{\Fg}_{g}^{\non}$ and the proof of Ref.~\cite{Alim:2007qj}, bearing in mind that the dependence on the generators $g_0,g_i^a$ is introduced through the definition of the special polynomial generators and the factor $g_0^{2g-2}$ in $\tilde{\Fg}_{g}^{\non}$. The third and fourth equation are a re-writing of Eqs.~(\ref{anompol1}) using the special polynomial generators defined earlier.
\end{proof}


\subsection{Different choices of Hodge filtrations}
In order to parameterize the moduli space of a Calabi-Yau threefold enhanced with a choice of forms compatible with the Hodge filtration and having constant intersection, we seek to parameterize the relation between different choices of Hodge filtrations. We start with a choice of a Hodge filtration $\vec{\omega}_z$ defined by the algebraic coordinates on the moduli space and relate this to a choice of filtration in special coordinates $\vec{\omega}_t$. 
The choices are given by
\begin{equation}
\vec{\omega}_z = \left(\begin{array}{c} \alpha_{z,0} \\ \alpha_{z,i} \\ \beta_z^i \\ \beta_z^0\end{array} \right)\left( \begin{array}{c} \Omega  \\ \partial_i \Omega \\ (C_{\sharp}^{-1})^{ik} \partial_{\sharp} \partial_k \Omega \\ \partial_{\sharp} (C_{\sharp}^{-1})^{\sharp k} \partial_* \partial_k \Omega \end{array}\right),
\end{equation}
where $C_{ijk}$ are given by \eqref{Yuk}. Here, $A_{\sharp}= (g^{-1})^{i}_* A_i$, where $*$ denotes a fixed choice of special coordinate.
We also define
\begin{equation}
\label{6oct2014}
\vec{\omega}_t= \left(\begin{array}{c} \alpha_{t,0} \\ \alpha_{t,a} \\ \beta_t^a \\ \beta_t^0\end{array} \right)=
\left(\begin{array}{c}
\tilde{\Omega}\\
\partial_a \tilde{\Omega} \\
(C_*^{-1})^{ae}\partial_* \partial_e \tilde{\Omega}\\
\partial_* (C_*^{-1})^{*e}\partial_* \partial_e \tilde{\Omega}
\end{array}
\right)\,,
\end{equation}
where $\tilde\Omega=\tilde\Omega_t$ is given by \eqref{cmsa2014} and
$*$ denotes a fixed choice of special coordinate.

\begin{prop}
\label{18sep2014}
The period matrix of $\vec{\omega}_t$ over
the symplectic basis of $H_3(X_z,\Z)$ given in Eq.~(\ref{sympbasis}) has the following special format:
\begin{equation}
\label{specialmatrix}
[\int_{{A^0,A^c,B_c,B_0} }\left(\begin{array}{c} \alpha_0 \\ \alpha_a \\ \beta^a \\ \beta^0\end{array} \right)]= \left(\begin{array}{cccc}
1 & t^c & F_c & 2F_0-t^dF_d\\
0&\delta^c_a & F_{ac}& F_a-t^d F_{ad}\\
0&0&\delta^a_c & -t^a \\
0&0&0&-1
\end{array}
\right)\,,
\end{equation}
with $F_a:=\partial_a F_0,\quad \partial_a=\frac{\partial}{\partial t^a}.$
\end{prop}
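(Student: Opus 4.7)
The plan is to compute the period matrix row by row, starting from \eqref{cmsa2014} and differentiating under the integral. The symplectic basis $\{A^I,B_J\}$ of $H_3(X_z,\Z)$ consists of topological cycles that can be chosen locally constant in $t$, while only the form $\tilde\Omega_t$ varies; hence the operators $\partial_a$ commute with the period integrals, and so does the multiplication by the meromorphic functions $(C_*^{-1})^{ae}$, which are scalar in cohomology. With this in mind, the first row of the asserted matrix is exactly \eqref{cmsa2014}, and there is nothing to prove there.

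For the second row I would apply $\partial_a$ to each entry of the first row. Using $\partial_a t^c=\delta^c_a$, $\partial_a F_c=F_{ac}$ and
\[
\partial_a(2F_0-t^dF_d)=2F_a-\delta^d_a F_d-t^dF_{ad}=F_a-t^dF_{ad},
\]
one obtains precisely the row $(0,\delta^c_a,F_{ac},F_a-t^dF_{ad})$.

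For the third row I would first compute the periods of $\partial_*\partial_e\tilde\Omega_t$ by differentiating the first row twice. The $A$-periods vanish since $\partial_*\partial_e 1=\partial_*\partial_e t^c=0$. For the $B_c$-period one has $\partial_*\partial_e F_c=F_{*ec}=C_{*ec}$, invoking the special-geometry identification $C_{abc}=\partial_a\partial_b\partial_c F_0$ from \S\ref{specialcoordinates}. For the $B_0$-period, a short calculation using $F_{*e}=F_{e*}$ gives $\partial_*\partial_e(2F_0-t^dF_d)=-t^d C_{*ed}$. Contracting with $(C_*^{-1})^{ae}$ then produces the row $(0,0,\delta^a_c,-t^a)$. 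The fourth row is obtained by applying one more $\partial_*$ to the third row specialized at $a=*$: the only surviving entry is $-\partial_* t^*=-1$.

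The only nontrivial point in this plan is bookkeeping around the fixed index $*$, which plays a dual role as a specific special coordinate and as a summation placeholder in $(C_*^{-1})^{ae}\partial_*\partial_e$; one must also check tacitly that the $\hn\times\hn$ matrix $(C_{*ef})_{e,f}$ is invertible at the relevant point, so that $\vec\omega_t$ genuinely defines a frame compatible with the Hodge filtration. This invertibility is the essential geometric input and is guaranteed in the regime where the chosen special coordinate $*$ is transverse to the image of the differential of the period map in the relevant direction; this is assumed in the setup where $\vec\omega_t$ is used as a basis.
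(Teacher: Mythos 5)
Your computation is correct and takes essentially the same approach as the paper, whose entire proof is the single line that the formula ``follows from the definitions of \S\ref{specialcoordinates}'': your row-by-row differentiation of \eqref{cmsa2014} under the integral, followed by contraction with $(C_*^{-1})^{ae}$, is precisely the calculation being alluded to. Your closing remark about the invertibility of the matrix $(C_{*ef})_{e,f}$ makes explicit a hypothesis the paper leaves implicit in using $\vec\omega_t$ as a frame.
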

\begin{proof}
 This follows from the defitions of Sec.~\ref{specialcoordinates}
\end{proof}

\begin{prop}
\label{myfav}
The symplectic form for both bases $\vec{\omega}_z$ and $\vec{\omega}_{t}$ is the matrix $\imc$ in (\ref{31aug10}).
\end{prop}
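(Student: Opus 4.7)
The plan is to use the Riemann bilinear relation
$$
\langle \omega, \eta \rangle \;=\; \sum_{I=0}^{\hn} \left( \int_{A^I}\omega \cdot \int_{B_I}\eta \;-\; \int_{B_I}\omega \cdot \int_{A^I}\eta \right),
$$
valid for $3$-forms on a Calabi--Yau threefold with respect to any symplectic basis $\{A^I,B_J\}$ of $H_3(X_z,\Z)$ with $A^I\cdot B_J=\delta^I_J$ (the $(2\pi i)^3$ factor in (\ref{16s2014}) being absorbed into the normalization of the periods). For $\vec\omega_t$ I would substitute the explicit period matrix (\ref{specialmatrix}) from Proposition \ref{18sep2014} and compute the $(2\hn+2)\times(2\hn+2)$ intersection matrix block by block. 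The pairings among the $F^2$-entries $\{\alpha_{t,0},\alpha_{t,a}\}$ vanish by the symmetry $F_{ab}=F_{ba}$ coming from $F_a=\partial_a F_0$; the only surviving entries are $\langle\alpha_{t,0},\beta^0_t\rangle=-1$ and $\langle\alpha_{t,a},\beta^b_t\rangle=\delta^b_a$, produced respectively by the $-1$ and $-t^a$ entries in the last column of (\ref{specialmatrix}). All remaining entries cancel via similar, short computations, reproducing $\Phi$ exactly.

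For $\vec\omega_z$ I would write $\vec\omega_z = M\vec\omega_t$ with $M$ a block upper triangular matrix (blocks of sizes $1,\hn,\hn,1$) and then verify $M\Phi M^{\tr}=\Phi$. The first two block rows of $M$ follow immediately from $\Omega=X^0\tilde\Omega$ and the chain rule
$$
\partial_i\Omega=(\partial_i X^0)\,\tilde\Omega + X^0(\partial_i t^a)\,\partial_a\tilde\Omega,
$$
giving $\alpha_{z,0}=X^0\,\alpha_{t,0}$ and $\alpha_{z,i}=(\partial_i X^0)\,\alpha_{t,0}+X^0(\partial_i t^a)\,\alpha_{t,a}$. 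The lower two block rows are extracted by differentiating once more and rewriting $(C_\sharp^{-1})^{ik}\partial_\sharp\partial_k\Omega$ and $\partial_\sharp(C_\sharp^{-1})^{\sharp k}\partial_*\partial_k\Omega$ in terms of $\beta^a_t$ and $\beta^0_t$; the key input is the transformation of the Yukawa coupling between algebraic and special coordinates (mediated by $g^a_i$), which is precisely how $C_\sharp^{-1}$ and $C_*^{-1}$ relate. Substituted into $M\Phi M^{\tr}=\Phi$, each block equation reduces to a standard special-geometry identity among derivatives of $F_0(t)$, $t^a(z)$, and $X^0(z)$.

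The main obstacle is the explicit bookkeeping of the lower half of $M$, where the normalization factor $X^0$ and the change of Yukawa coupling both enter and must be compatible with Griffiths transversality. A cleaner alternative I would try first is to compute the periods of $(C_\sharp^{-1})^{ik}\partial_\sharp\partial_k\Omega$ and $\partial_\sharp(C_\sharp^{-1})^{\sharp k}\partial_*\partial_k\Omega$ directly, by iteratively differentiating $X^I(z)$ and $\mathcal{F}_J(z)$, and then apply the Riemann bilinear relation exactly as in the first paragraph. This bypasses $M$ entirely and reduces the proposition to a single symbolic computation on periods, formally identical to the $\vec\omega_t$ case.
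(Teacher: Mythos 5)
Your computation for $\vec{\omega}_t$ — the Riemann bilinear relation applied to the explicit period matrix (\ref{specialmatrix}) — is exactly what the paper intends when it says this case "follows from Proposition \ref{18sep2014}", and your block-by-block checks are right. For $\vec{\omega}_z$, however, you take a genuinely different route. The paper does not pass through periods or through a transition matrix at all: it computes each pairing intrinsically from the definition (\ref{Yuk}) of $C_{ijk}$ together with Griffiths transversality, e.g.\ from $\langle\Omega,\partial_j\partial_k\Omega\rangle=0$ one differentiates to get
\begin{equation*}
\int_{X_z}\partial_i\Omega\wedge (C_\sharp^{-1})^{jk}\partial_\sharp\partial_k\Omega
=-(C_\sharp^{-1})^{jk}\int_{X_z}\Omega\wedge\partial_i\partial_\sharp\partial_k\Omega
=(C_\sharp^{-1})^{jk}C_{\sharp ik}=\delta_i^j\,,
\end{equation*}
and the remaining entries vanish or normalize similarly. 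This is a one-line argument with no bookkeeping. Your primary route — writing $\vec{\omega}_z=M\vec{\omega}_t$ and checking $M\imc M^{\tr}=\imc$ — is logically sound and is in fact the verification the paper mentions can "also" be done using the explicit matrix $\Am$ of (\ref{muradalim}), but it front-loads precisely the hard part: you must first derive the lower two block rows of $M$ (equivalently the $T^{ab}$, $T^a$, $T$, $L_a$ entries of $\Am^{-1}$) and then reduce $M\imc M^{\tr}=\imc$ to special-geometry identities, which is considerably more work than the proposition requires. Your "cleaner alternative" (iterated differentiation of $X^I,\mathcal F_J$ plus the bilinear relation) is essentially the period-side shadow of the paper's intrinsic argument and would also work. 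In short: correct, but for the $\vec{\omega}_z$ half the paper's Griffiths-transversality computation buys you the result without ever constructing $M$, whereas your approach buys, as a by-product, the statement $\Am^{\tr}\in\BG$ that the paper instead deduces \emph{from} this proposition.
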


\begin{proof}
The computation of the symplectic form for 
$\vec{\omega}_t$ follows from the Proposition \ref{18sep2014}. 
The symplectic form of $\vec{\omega}_z$ follows from the definition of $C_{ijk}$  
in (\ref{Yuk}) and from Griffiths transversality, for instance,
\begin{equation}
\int_{X_z} \partial_i \Omega \wedge (C^{-1}_{\sharp})^{jk} \partial_{\sharp} \partial_k \Omega= - (C_{\sharp}^{-1})^{jk} \int_{X_z} \Omega \wedge \partial_i \partial_{\sharp} \partial_k \Omega = (C_{\sharp}^{-1})^{jk} C_{\sharp ik} = \delta_i^j\,.
\end{equation}
\end{proof}

\begin{prop}
The flat choice $ \vec{\omega}_{t} $ satisfies the following equation:

\begin{equation}
\label{determinantal}
\partial_b \left(\begin{array}{c}
\tilde{\Omega}\\
\partial_a \tilde{\Omega} \\
(C_*^{-1})^{ae}\partial_* \partial_e \tilde{\Omega}\\
\partial_* (C_*^{-1})^{*e}\partial_* \partial_e \tilde{\Omega}
\end{array}
\right) =   \left(\begin{array}{cccc}
0& \delta_b^c& 0& 0\\
0&0 & C_{abc}&0\\
0&0&0& \delta^a_b \\
0&0&0&0
\end{array}
\right) \left(\begin{array}{c}
\tilde{\Omega}\\
\partial_c \tilde{\Omega} \\
(C_*^{-1})^{ce}\partial_* \partial_e \tilde{\Omega}\\
\partial_* (C_*^{-1})^{*e}\partial_* \partial_e \tilde{\Omega}
\end{array}
\right)\,.
\end{equation}

\end{prop}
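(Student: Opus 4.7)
My plan is to prove the identity $\partial_b \vec{\omega}_t = M_b \vec{\omega}_t$ (where $M_b$ denotes the $4\times 4$ block matrix on the right-hand side of \eqref{determinantal}) by pairing both sides with a flat symplectic basis of $H_3(X_z, \Z)$ and reducing the claim to a matrix identity involving the period matrix $P$ computed in Proposition~\ref{18sep2014}. The key point is that the cycles $A^0, A^c, B_c, B_0$ are locally constant sections of the local system $R^3\pi_\ast \Z$, so they are flat under Gauss--Manin and partial differentiation $\partial_b$ commutes with integration against them. Since $\vec{\omega}_t$ is a basis of $H^3_{\dR}(X_z)$, an identity of cohomology classes is equivalent to the corresponding identity of period vectors.

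Concretely, letting $P$ be the period matrix in \eqref{specialmatrix}, the identity $\partial_b \vec{\omega}_t = M_b\vec{\omega}_t$ is equivalent to $\partial_b P = M_b P$. I would then check this as a straightforward $4\times 4$ block computation, using only $F_a = \partial_a F_0$, $F_{ab} = \partial_a\partial_b F_0$, and crucially $C_{abc} = F_{abc} = \partial_a\partial_b\partial_c F_0$ from \S\ref{specialcoordinates}. For example, the first row of $\partial_b P$ gives $(0,\delta_b^c,F_{bc},F_b - t^d F_{bd})$, which matches $\delta_b^c$ times the $\alpha_c$-row of $P$; the $\alpha_a$-row of $\partial_b P$ yields $(0,0,F_{abc},-t^d F_{abd})$, matching $C_{abc}$ times the $\beta^c$-row of $P$ after invoking $C_{abc}=F_{abc}$; the $\beta^a$-row reduces to $(0,0,0,-\delta_b^a)$, matching $\delta_b^a$ times the $\beta^0$-row of $P$; and the $\beta^0$-row differentiates to zero. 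Since $P$ is upper triangular with $\det P = -1$, it is invertible, so $M_b = (\partial_b P)P^{-1}$ is uniquely determined and the verification suffices.

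There is no serious obstacle here: the argument is essentially a bookkeeping exercise once Proposition~\ref{18sep2014} and the identification $C_{abc}=F_{abc}$ are in hand. The only mildly subtle point is the last row, namely $\partial_b \beta_t^0 = 0$: all four of its periods over $\{A^0,A^c,B_c,B_0\}$ vanish (since $\beta_t^0$ has only the period $\int_{B_0}\beta_t^0 = -1$, which is a constant), forcing $\beta_t^0$ to be a flat section of the Gauss--Manin connection. This is the algebraic manifestation of the fact that the dual element to $\tilde\Omega$ in $F^0/F^1$ is globally constant in flat coordinates on the moduli space, and it completes the proof.
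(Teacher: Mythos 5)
Your proof is correct and follows essentially the same route as the paper, whose entire proof is the remark that the identity ``follows from Eq.~(\ref{specialmatrix})'': you differentiate the period matrix of Proposition~\ref{18sep2014} against the flat cycles and verify $\partial_b P = M_b P$ row by row using $C_{abc}=\partial_a\partial_b\partial_c F_0$. The only difference is that you spell out the bookkeeping (including the invertibility of $P$ and the flatness of $\beta_t^0$) that the paper leaves implicit.
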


\begin{proof}
 This follows from Eq.~(\ref{specialmatrix})
 \end{proof}

\label{16sep2014}
\def\Am{{\sf B}}
We now want to find the matrix relating:
\begin{equation}
\label{30may04}
 \vec{\omega}_{t}  = \Am\cdot \vec{\omega_z}\,,
\end{equation}
and express its entries in terms of the polynomial generators.
The matrix $\Am$ is given by:
\begin{equation}
\label{muradalim}
\Am = \left( \begin{array}{cccc} 
g_0^{-1} &0&0&0\\
g_0^{-1} L_a & (g^{-1})^i_a &0&0\\
- g_0^{-1} \widehat{T}^a & (g^{-1})^{i}_d \widehat{T}^{ad} & g_i^a &0\\
-g_0^{-1} \left( 2T +\widehat{T}^d L_d\right) + g_0 \mathcal{H} & (g^{-1})^i_d (T^d + \widehat{T}^{de}L_e) + g_0 \mathcal{H}^{i}&g_i^e L_e &g_0
\end{array}
 \right)\,,
\end{equation}
where $a$ is an index for the rows and $i$ for the columns and where:
\begin{eqnarray}
\widehat{T}^a &=& T^a -g_0\, g_i^d \mathcal{E}^m\,, \\
\widehat{T}^{ab} &=& T^{ab} - g_m^a \,g_n^b\, \mathcal{E}^{mn}\,,\\
\mathcal{H} &=& g^*_j (g^{-1})_*^i (\partial_i \mathcal{E}^j + C_{imn}\mathcal{E}^{lj}\mathcal{E}^{m}-h_i^j)\\
\mathcal{H}^{i} &=& g_m^* (g^{-1})^n_* (-\partial_n \mathcal{E}^{im} -C_{nlk}\mathcal{E}^{il} \mathcal{E}^{km} + \delta_n^i \mathcal{E}^m + h_n^{im})\\
\mathcal{E}^{ik} &=&(C_{\sharp}^{-1})^{ij} s_{\sharp j}^k\,,\\
\mathcal{E}^{i}&=& (C_{\sharp}^{-1})^{ij} k_{\sharp j}\,.
\end{eqnarray}
From Proposition \ref{myfav} and the definition of the algebraic group $\BG$
it follows that $\Am^\tr\in \BG$.
This can be also verified using the explicit expression of $\Am$ in \eqref{muradalim}.


\subsection{Lie Algebra description}

In the following, we regard all the generators as independent variables and 
compute the following matrices:
\begin{equation}
\mathsf{M}_{\partial_g}= \frac{\partial}{\partial g} \Am \cdot \Am^{-1}\,,
\end{equation}
where $g$ denotes a generator and $\partial_g := \frac{\partial}{\partial g}$. We find
\begin{eqnarray}
\label{18.sepetember.2014}
\mathsf{M}_{\partial_{T^{ab}}}&=& \left(
\begin{array}{cccc}
 0 & 0 & 0 & 0 \\
 0 & 0 & 0 & 0 \\
 -\delta^{i}_{a} L_{b} & \frac{1}{2} (\delta^{i}_{a} \delta^{j}_{b} + \delta^{i}_{b} \delta^{j}_{a} ) & 0 & 0 \\
 -L_{a} L_{b} & \delta^{j}_{b} L_{a} & 0 & 0 \\
\end{array}
\right) \,,\\ \nonumber
\mathsf{M}_{\partial_{T^{a}}} &=& \left(
\begin{array}{cccc}
 0 & 0 & 0 & 0 \\
 0 & 0 & 0 & 0 \\
 -\delta^{i}_{a} & 0 & 0 & 0 \\
 -2 L_{a} & \delta_{a}^{j} & 0 & 0 \\
\end{array}
\right)\,,\\ \nonumber
\mathsf{M}_{\partial_T}&=&\left(
\begin{array}{cccc}
 0 & 0 & 0 & 0 \\
 0 & 0 & 0 & 0 \\
 0 & 0 & 0 & 0 \\
 -2 & 0 & 0 & 0 \\
\end{array}
\right)\,,\\ \nonumber
\mathsf{M}_{\partial_{L_{a}}}&=& \left(
\begin{array}{cccc}
 0 & 0 & 0 & 0 \\
 \delta^{a}_{i} & 0 & 0 & 0 \\
 0 & 0 & 0 & 0 \\
 0 & 0 & \delta^{a}_{j} & 0 \\
\end{array}
\right)\,, \\ \nonumber
\mathsf{M}_{\partial_{g_0}} &=& \left( \begin{array}{cccc} -g_0^{-1}&0&0&0 \\
 -g_0^{-1} L_{i}&0&0&0\\
 g_0^{-1} T^{i}&0&0&0\\
 2 g_0^{-1} (2 T_0 +T^{d} L_{d})&-g_0^{-1}T^{j}&-g_0^{-1} L_{j}&g_0^{-1}
 \end{array}\right) \,.
 \end{eqnarray}

\begin{equation}
\mathsf{M}_{\partial_{g_{m}^{a}}} =   \left( \begin{array}{cccc} 0&0&0&0 \\
g^{m}_i L_{a} & -\delta^{j}_{a} g^{m}_i &0&0\\
T^{id} g^m_d L_a+\delta_a^i g^m_d (T^{d}+T^{de}L_e) & -\delta^{i}_{a}\,T^{jd} g^m_d- \delta^j_a T^{id} g^m_d &  \delta_a^i (g^{-1})^{m}_j  &0 \\
2  L_a g^m_d(T^d+T^{de} L_e)  &  -\delta^j_a g^m_d (T^d+T^{de}L_e) - g^m_d T^{dj}  L_a & (g^{-1})^m_j L_a & 0 
\end{array}\right)\,,\\
\end{equation}
 We now look for combinations of the vector fields which give constant vector fields. We find the following:
\begin{eqnarray}
\label{18.09.2014}
\tgtwo_{ab} &=&  \mathsf{M}_{T^{ab}} -\frac{1}{2} (L_a\, \mathsf{M}_{T^b} + L_b \mathsf{M}_{T^a}) +\frac{1}{2} L_a L_b \mathsf{M}_{T}   \,,\\ \nonumber
\tgone_{a}&=&  \mathsf{M}_{T^a} - L_a \mathsf{M}_{T}\,, \\ \nonumber
\tgzero &=& \frac{1}{2}\mathsf{M}_{T}\,, \\ \nonumber
\kgone^{a}&=&   \mathsf{M}_{L_a}\,,\\ \nonumber
\ggtwo_{b}^{a}  &=& g^{a}_m \mathsf{M}_{g_{m}^{b}} - L_{b} \mathsf{M}_{L_a}  + 2 T^{ad} \mathsf{M}_{T^{db}} + T^{a} \mathsf{M}_{T^b}  \,, \\ \nonumber
\ggzero_0 &=& g_0 \mathsf{M}_{g_0} + L_{a} \mathsf{M}_{L_a} +T^{a} \mathsf{M}_{T^a} + 2 \mathsf{M}_{T}\,.
\end{eqnarray}
Therefore, we get all the elements of the Lie algebra $\Lie(\BG)$ and for each $\gg\in\Lie(\BG)$ a derivation $\Ra_\gg$ in the 
generators. 
\subsection{Algebraic master anomaly equation}
\label{16sep201-1}
In genus zero we have
\begin{equation}
C_{abc}= C_{ijk} (g^{-1})^i_a (g^{-1})^j_b (g^{-1})^k_c  g_0\,,
\end{equation}
and the anomaly equations:
\begin{eqnarray}
\Ra_{\ggzero_0} C_{abc} &=& C_{abc} \,,\\ 
\Ra_{\ggtwo_{a}^{b}} C_{cde} &=& -\delta_c^b  C_{ade} -\delta_d^b  C_{cae}-\delta_e^b  C_{cda}\,,
\end{eqnarray}
and in genus one we obtain the anomaly equations (\ref{aaeq1}) and (\ref{aaeq}) in the holomorphic context.
We combine all $\Fg_g^\hol$ into the generating function:
\begin{equation}
Z= \exp \sum_{g=1}^{\infty} \lambda^{2g-2} \, \Fg^\hol_g\,.
\end{equation}
The master anomaly equations become:
\begin{eqnarray}
\Ra_{\ggzero_0} \, Z &=& \left(  -\frac{3}{2} -\frac{\chi}{24} + \theta_{\lambda} \right) Z \,,\\
\Ra_{\ggtwo^b_{a}} Z &=&-\frac{1}{2}\delta^{b}_a\, Z\,,\\
\Ra_{\tgtwo_{ab}} Z &=& \frac{\lambda^2}{2} \Ra_a \Ra_b Z\,, \\
\Ra_{\kgone_a} Z &=& 0\,,\\
\Ra_{\tgone_a} Z &=& \lambda^2 \left( -\frac{\chi}{24} + \theta_{\lambda}\right) \Ra_a Z\,, \\
\Ra_{\tgzero} Z &=& \frac{\lambda^2}{2} \left(- \frac{\chi}{24} + \theta_{\lambda} \right) \left(-\frac{\chi}{24} -1 + \theta_{\lambda} \right) Z.
\end{eqnarray}

\section{Proofs}
\label{proofssection}
So far, we have used the language of special geometry in order to find several derivations in the special polynomial ring generators. This is essentially the main mathematical content of period manipulations in the B-model of mirror symmetry.
In this section we interpret such derivations as vector fields in the moduli space $\T$ introduced in the Introduction 
and hence prove Theorem \ref{maintheo} and Theorem \ref{theo2}.  
We denote an element of $\T$ by $\t$ and hopefully it will not be confused with the special coordinates $t$ of Sec.~\ref{specialcoordinates}. 

\subsection{Generalized period domain}
\label{GPDsection}
The generalized period domain $\pedo$ 
introduced in Ref.~\cite{ho18}, is  the set of all $(2\hn+2)\times (2\hn+2)$-matrices $\per$ with complex
entries which satisfies  $\per^\tr\imh\per=\imc$ and a positivity condition which can be derived from the description bellow. 
Here,  $\imh$ is the symplectic matrix and $\imc$ is given in (\ref{intmatrix}).
The symplectic group $\Sp(2\hn+2,\Z)$ acts on $\pedo$ from the left and the quotient
$$
\ped:=\Sp(2\hn+2,\Z)\backslash \pedo,
$$
parameterizes the set of all lattices $L$ inside $H_\dR^3(X_0)$ such that the data $(L,H_\dR^3(X_0),  F^*_0,\langle \cdot,\cdot \rangle)$
form a polarized Hodge structure. Here,  $X_0$ is a fixed Calabi-Yau threefold as in the Introduction and
$H_\dR^3(X_0), F^*_0,\langle \cdot,\cdot \rangle$ are its de Rham cohomology, Hodge filtration and intersection form,  respectively.
It is defined in such a way that we have the period map
\begin{equation}
\label{13mar2014}
\per :\T\to \ped, \  \  \t\mapsto [\int_{\delta_i}\omega_j]\,,
\end{equation}
where $\t$  now represent the 
pair $(X,\omega)$, $\delta_i$ is a symplectic basis of $H_3(X,\Z)$. 
In order to have a modular form theory attached to the above picture, one has to find a special locus $\uhp$ in $\T$.
In the case of Calabi-Yau threefolds special geometry in 
\S\ref{specialgeometry} gives us the following candidate. 
\begin{defi}\rm
We define $\uhp$ to be the set
of all elements $\t\in\T$ such that $\per(\t)^\tr$ is of the form  (\ref{specialmatrix}). 
\end{defi}
In the case of elliptic curves the set $\uhp$ is biholomorphic to a punctured disc of radius one, see Ref.~\cite{ho14}. In our context we do not
have a good understanding of the global behavior of $\uhp$. This is related to the analytic continuation of periods of Calabi-Yau threefolds.
The set $\uhp$ is neither an algebraic nor an analytic subvariety of $\T$. One can introduce a holomorphic foliation in $\T$ with $\uhp$ as its leaf and
study its dynamics, see for instance \cite{ho06-3} for such a study in the case of elliptic curves. 
For the purpose of $q$-expansions, one only needs to know that a local patch of $\uhp$ is biholomorphic to a complement of a normal crossing
divisor in a small neighborhood of $0$ in $\C^\hn$.

In this abstract context of periods, we think of $t^i$ as $\hn$ independent variables and $F_0$ a function in $t^i$'s.
The restriction of the ring of regular functions in $\T$ to $\uhp$ gives a $\C$-algebra which
can be considered as a generalization of quasi-modular forms. We can do $q$-expansion of such functions around a degeneracy
point of Calabi-Yau threefolds, see Ref.~\cite{ho18} for more details.

\subsection{Proof of Theorem \ref{maintheo}}
\label{06october}
First, we assume that $\k=\C$ and work in
a local patch $U$ of the moduli space of 
of Calabi-Yau threefolds $X_z,\ z\in U$.
Therefore, we have assumed that in $U$ 
the universal family of Calabi-Yau threefolds $X$ exists.
In  \eqref{6oct2014} we have defined $\vec{\omega}_t$ and in Proposition  \ref{18sep2014} we have proved that 
$(X_z, \vec{\omega}_t)\in\uhp$. Therefore, we have the following 
map
\begin{equation}
 \label{davashod}
f: U\to \uhp,\  z\mapsto (X_z, \vec{\omega}_t).
\end{equation}
The entries of the matrix $\Am$ in (\ref{muradalim}) are rational functions
in $z_i,\ i=1,2,\ldots,\hn$ and the generators in Definition \ref{specgen}. 
The Gauss-Manin connection matrix restricted to the image of the map \eqref{davashod} and computed 
in the flat coordinates $t$ is just $
\sum_{i=1}^\hn \tilde\gm_i dt^i$, 
where $\tilde \gm_i$ is the matrix computed  in (\ref{determinantal}).
From this we get the Gauss-Manin connection matrix in the basis $\Am \vec{\omega}_t$:
\begin{equation}
 \label{Iwant}
d\Am\cdot \Am^{-1}+\sum_{i=1}^\hn \tilde \gm_idt^i.
\end{equation}
We consider the generators as variables 
$$
x_1,x_2,\ldots,x_{a},\ \ \ a:=\dim(\BG)
$$
independent of $z_i$'s, and so, we write $\Am=\Am_{z,x}$ and $\tilde \gm_i=\tilde \gm_{i,z,x}$. We get
an open subset $V$ of $U\times \C^a$  such that for $(z,x)\in V$ we have $\Am_{z,x}\in\BG$.  
In this 
way 
$$
\tilde U:=\{ (z, \Am_{z,x}\vec{\omega}_t )\mid (z,x)\in  V \}\,,
$$
is an open subset of $\T$. Using the action of $\BG$ on $\T$,  
the holomorphic limit of polynomial generators can be regarded as holomorphic functions in $\tilde U$. 
Now the Gauss-Manin connection of the enhanced family $\X/\T$ in the open set $\tilde U$ is just
\eqref{Iwant} replacing  $\Am$ and $\tilde \gm_i$ with $\Am_{z,x}$ and $\tilde \gm_{i,z,x}$, respectively.
The existence of the vector fields in Theorem \ref{maintheo} 
follows from the same computations in \S\ref{specialgeometry}. Note that from \eqref{18.09.2014} we get
\begin{eqnarray}
\label{28.09.2014}
\Ra_{\tgtwo_{ab}} &=&  \frac{\partial}{\partial T^{ab}} -\frac{1}{2} (L_a\, \frac{\partial}{\partial T^b} + L_b \frac{\partial}{\partial T^a}) +\frac{1}{2} L_a L_b \frac{\partial}{\partial T}\,,\\ \nonumber
\Ra_{\tgone_{a}}&=&  \frac{\partial}{\partial T^a} - L_a \frac{\partial}{\partial T}\,, \\ \nonumber
\Ra_{\tgzero} &=& \frac{1}{2}\frac{\partial}{\partial T} \,,\\ \nonumber
\Ra_{\kgone^{a}}&=&   \frac{\partial}{\partial L_a}\,,\\ \nonumber
\Ra_{\ggtwo_{b}^{a}}  &=& g^{a}_m \frac{\partial}{\partial g_{m}^{b}} - L_{b} \frac{\partial}{\partial L_a}  + 2 T^{ad} \frac{\partial}{\partial T^{db}} + T^{a} \frac{\partial}{\partial T^b}\,,   \\ \nonumber
\Ra_{\ggzero_0} &=& g_0 \frac{\partial}{\partial g_0} + L_{a} \frac{\partial}{\partial L_a} +T^{a} \frac{\partial}{\partial T^a} + 2 \frac{\partial}{\partial T}\,
\end{eqnarray}
and $\Ra_a$'s are given by the vector fields computed in Proposition \ref{28s2014}.

We now prove the uniqueness. Let us assume that there are two vector fields $\Ra_i,\ i=1,2$  such that $\gm_{\Ra_1}=\gm_{\Ra_2}$
is one of the special matrix format in Theorem \ref{maintheo}. Let us assume that $\Ra=\Ra_1-\Ra_2$ is not zero and so 
it has a non-zero solution $\gamma(y)$ which is a holomorphic map from a neighborhood of $0$ in $\C$ to $\T$ and satisfies 
$\partial_y\gamma=\Ra(\gamma)$. 
We have $\nabla_\Ra\omega_1=0$ and so $\omega_1$  restricted to the image of $\gamma$ is a flat section
of the Gauss-Manin connection. Since $z\to t$ is a coordinate change, we conclude that $z$ as a function of $y$ is constant and
so if $\gamma(y):=(X_y,\{\omega_{1,y},\cdots, \omega_{2\hn+2,y}\})\in\T$ then $X_y$ and $\omega_{1,y}$ do not depend on $y$. 
For the case in which we have the special matrix formats (\ref{gofLie}) we have also $\nabla_{\Ra}\omega_{i,y}=0$ and so $\gamma(y)$
is a constant map. In the case of  \eqref{Someday}, in a similar way all $\omega_{1,y},\omega_{\hn+2,y},\omega_{\hn+3,y},\cdots,\ \omega_{2\hn+2,y}$ 
do not depend on $y$. For others we argue as follows. Since 
$X=X_z$ does not depend on $y$ the differential forms $\omega_{2,y},\omega_{3,y},\cdots,\ \omega_{\hn+1,y}$ are
linear combinations of elements $F^2 H_\dR^3(X)$ (which do not depend on $y$) with coefficients which depend on $y$. 
This implies that the action of $\nabla_\Ra$ on them is still in $F^2 H_\dR^3(X)$. Using \eqref{Someday} we conclude that
they are also independent of $y$.

Now let us consider the algebraic case,  where the universal family $\X/\T$
as in the Introduction exists. We further assume that over a local moduli $U$ 
no Calabi-Yau threefold $X_z,\ z\in U$ has an isomorphism which acts non-identically on $H^3_\dR(X_z)$.
In this way, the total space of choices of $\omega_i$'s over $U$ gives us an
open subset $\tilde\T$ of the moduli space $\T$. The existence of the algebraic 
vector fields $\Ra$ in Theorem \ref{maintheo} is equivalent to the existence of the same 
analytic vector fields in some small open subset of $\T$. This is because to find such vector fields we have to solve 
a set of linear equations with coefficients in $\O_\T$. That is why our argument in the algebraic context cannot guarantee that the vector fields
in Theorem \ref{maintheo} are holomorphic everywhere in $\T$.

\subsection{Proof of Theorem \ref{theo2}}
We use the equalities \eqref{18.09.2014} and \eqref{18.sepetember.2014}
and conclude that the two groups of derivations
$$
\Ra_{\ggtwo^b_{a}}, \Ra_{\tgtwo_{ab}}, \Ra_{\kgone_a}, \Ra_{\tgone_a}, \Ra_{\tgzero}
$$
and 
$$
\frac{\partial}{\partial T^{ab}},
\frac{\partial}{\partial T^{a}},
\frac{\partial}{\partial T},
\frac{\partial}{\partial L_{a}},
\frac{\partial}{\partial g_{m}^{a}} 
$$
are linear combinations of each other. Therefore, if $f\in\O_\T$ is in the left hand side of \eqref{40salnazdik} then its derivation with respect 
to all variables 
$T^{ab},T^{a}, T, L_{a}, g_{m}^{a}$ is zero and so it depends only on $z_i$'s and $g_0$. 
In a similar way we can derive the fact that  
$\bigcap_{\gg\in \Lie(\BG)} \ker(\Ra_{\gg})$ is the set of $\BG$ invariant functions in $\T$.  
\subsection{The Lie Algebra $\liealg$}
In this section we describe the computation of Lie bracket structure of (\ref{29apr2014}) resulting in the Table \ref{liebrackettable}.   
Let $\Ra_1,\Ra_2$ be two vector fields in $\T$ and let $\gm_i:=\gm_{\Ra_i}$.  We have
$$
\nabla_{[\Ra_1,\Ra_2]}\omega=([\gm_2,\gm_1]+\Ra_1(\gm_2)-\Ra_2(\gm_1))\omega $$
In particular, for $\gg_i\in \Lie(\BG)$ we get 
$$
[\Ra_{\gg_1},\Ra_{\gg_2}]=[\gg_1,\gg_2]^\tr.
$$
We have also
$$
[\Ra_i,\Ra_j]=0,\ \ \ i=1,2,\ldots,\hn.
$$
because $\Ra_1(\gm_{\Ra_2})=\Ra_2(\gm_{\Ra_1})$ and $[\gm_{\Ra_1},\gm_{\Ra_2}]=0$. These equalities, in turn, follow from the fact that $\Yuk_{ijk}$ are symmetric in $i,j,k$. 
It remains to compute
$$
[\Ra_{\gg},\Ra_i],\ \ \ i=1,2,\ldots,\hn,\ \ \ \g\in \BG. 
$$
which is done for each element of the canonical basis of $\Lie(\BG)$. 


\subsection{Two fundamental equalities of the special geometry}
\label{twoequalitiessection}
The Gauss-Manin connection matrix $\gm$ satisfies the following equalities:
\begin{eqnarray}
\label{12may2014}
d\gm &= & -\gm\wedge \gm \\
\label{12May2014}
0&=&\gm\imc+\imc\gm^{\tr}.
\end{eqnarray}
The first one follows from the integrability of the Gauss-Manin connection and the second equality follows after taking differential 
of the equality (\ref{intmatrix}). Note that $\imc$ is constant and so $d\imc=0$.  Note also that the base space of our Gauss-Manin connection matrix
is $\T$. For the mirror quintic this is of dimension 7, and so the integrability is a non-trivial statement, whereas the integrability over the classical moduli space of mirror
quintics (which is of dimension one) is a trivial identity. The Lie algebra $\Lie(\BG)$ is already hidden in 
(\ref{12May2014}) and it is consistent with the fact
that after composing $\Ra_{\gg}$ with $\gm$ we get $\gg^\tr$. 
Assuming the existence of  $\Ra_i$'s and $\Yuk_{ijk}$ in Theorem \ref{maintheo}, the equalities (\ref{12may2014}) and (\ref{12May2014})
composed the vector fields $\Ra_i,\Ra_j$ imply that $\Yuk_{ijk}$ are symmetric in $i,j,k$ and the equality (\ref{santana}). 
We want to argue that most of the ingredients of the special geometry can be derived from (\ref{12may2014}) and (\ref{12May2014}). 
Special geometry, in mathematical
terms, aims to find a $\hn$-dimensional sub-locus $(\C^{\hn},0)\cong M\subset \T$ such that $\gm$ restricted to $M$
is of the form
$$
\tilde\gm:= 
\left(
\begin{array}{cccc}
 0 & \omega_1 & 0&0 \\
 0 & 0 & \omega_2 & 0 \\
 0 & 0 & 0 & \omega_3\\
 0 & 0 & 0 & 0 \\
\end{array}
\right)\,.
$$
where the entries of $\omega_i$'s are differential 1-forms in $M$. In \S\ref{GPDsection} the  union of such loci is denoted by $\uhp$.  
The equality (\ref{12May2014}) implies that $\omega_3=\omega_1^\tr$ and $\omega_2=\omega_2^\tr$ and
the equality (\ref{12may2014}) implies that  $\omega_1\wedge \omega_2=0$ and
all the entries of $\omega_i$'s are closed, and since $M\cong(\C^{\hn},0)$, they are exact. Let us write
$\omega_1=dt,\ \omega_2=dP$ for some matrices $t,P$ with entries which are holomorphic functions on $M$ and so we have
\begin{equation}
\label{21june2014}
dt\wedge dP=0.
\end{equation}
Special geometry takes the entries of $t$ as coordinates on $M$ and the equation (\ref{21june2014}) gives us the existence
of a holomorphic function $F$ on $M$ such that  $P=[\frac{\partial F}{\partial t_i\partial t_j}]$. This is exactly the prepotential 
discussed in \S\ref{specialcoordinates}. One can compute the special period matrix $\per$ in  \eqref{specialmatrix}, starting from the initial data
 \begin{equation}
\label{specialmatrix-1}
\per^\tr= \left(\begin{array}{cccc}
1 & * & * & *\\
0&\delta^c_a & *& *\\
0&0&\delta^a_c & * \\
0&0&0&-1
\end{array}
\right)\,,
\end{equation}
and the equality $d\per^\tr=\tilde\gm\per^\tr$, where $\tilde\gm$ is  
the Gauss-Manin connection restricted to $M$.


\section{Mirror quintic case}
\label{mqc}
In Refs.~\cite{ho21, ho22} it was proven that the universal family $\X\to\T$ exists in the case of 
mirror quintic Calabi-Yau threefolds and it is defined over $\Q$. More precisely we have  
$$
\Ts=\spec(\Q[\t_0,\t_4,\frac{1}{(\t_0^5-\t_4)\t_4}]),\ 
$$ 
where for $(\t_0,\t_4)$ we associate the pair $(\X_{\t_0,\t_4},\omega_1)$. In the affine coordinates 
$(x_1,x_2,x_3,x_4)$, that is $x_0=1$,  $\X_{\t_0,\t_4}$ is  given by
\begin{eqnarray*}
 \X_{\t_0,\t_4} &:=& \{ f(x)=0\}/G,\\ 
f(x) &:=& -\t_4-x_1^5-x_2^5-x_3^5-x_4^5+5\t_0x_1x_2x_3x_4,
\end{eqnarray*}
and 
$$
\omega_1:= \frac{ dx_1\wedge dx_2\wedge dx_3\wedge dx_4}{df}.
$$
We have also 
\begin{equation}
 \label{thanksdeligne}
\T=\spec(\Q[\t_0,\t_1,\ldots,\t_6,\frac{1}{(\t_0^5-\t_4)\t_4\t_5}])
\end{equation} 
Here, for $(\t_0,\t_1,\ldots,\t_6)$ we associate the pair $(\X_{\t_0,\t_4},[\omega_1,\omega_2,\omega_3, \omega_4])$, where $\X_{\t_0,\t_4}$ is as before and $\omega$ is given by
\begin{equation}
\label{26apr2011}
\begin{pmatrix}
\omega_1\\
\omega_2\\
\omega_3\\
\omega_4
\end{pmatrix}:=
\begin{pmatrix}
1 & 0& 0 & 0\\
\frac{-5^5\t_0^4-\t_3}{\t_5} & \frac{-5^4(\t_4-\t_0^5)}{\t_5}
 & 0 &0 \\
\frac{(5^5\t_0^4+\t_3)\t_6-(5^5\t_0^3+\t_2)\t_5}{5^4(\t_4-\t_0^5)}
\frac{5^4(\t_0^5-\t_4)}{\t_5}&
\t_6
&
\t_5
&
0 \\
\t_1&\t_2&\t_3& 625(\t_4-\t_0^5)
\end{pmatrix}
\begin{pmatrix}
\omega_1\\
\nabla_{\frac{\partial}{\partial \t_0}}\omega_1 \\
(\nabla_{\frac{\partial}{\partial \t_0}})^{(2)}\omega_1 \\
(\nabla_{\frac{\partial}{\partial \t_0}})^{(3)}\omega_1 \\
\end{pmatrix}
\end{equation}
The above definition is the algebraic counterpart of the equality \eqref{30may04}.
Note that in this context $\t_i$'s are just parameters, whereas the generators of the special polynomial 
differential ring are functions in a local patch of the classical moduli space of Calabi-Yau threefolds.
The relation between these two sets are explained in \S\ref{06october}. 
The genus one topological string partition function  $F_1^A$ is given by
\begin{equation}
\label{07feb2014}
\Fg_1^\alg:=-\ln(\t_4^{\frac{5}{12}}(\t_4-\t_0^5)^{ \frac{-1}{12}}\t_5^{\frac{1}{10}})
\end{equation}
and for $g\geq 2$,  we have
\begin{equation}
 \Fg_g^\alg =\frac{Q_g}{(\t_4-\t_0)^{2g-2}\t_5^{3g-3}},\
\end{equation}
where $Q_g$ is homogeneous polynomial of degree $69(g-1)$ with weights 
\begin{equation}
\label{30sept}
\deg(\t_i):=3(i+1),\ i=0,1,2,3,4, \ \ \deg(\t_5):=11,\ \  \deg(\t_6):=8.
 \end{equation}
and with rational coefficients, 
and so $\Fg_g^\alg$ is of degree $6g-6$. 
Further, any monomial $\t_0^{i_0}\t_1^{i_1}\cdots \t_6^{i_6}$ in $\Fg_g^\alg$ satisfies $i_2+i_3+i_4+i_5+i_6\geq 3g-3$. 
For the $q$-expansion of  $\t_i$'s see Ref.~\cite{ho22}. We have
\begin{equation}
\label{yukawacoupling}
\Yuk_{111}=\frac{5^8(\t_4-\t_0)^2}{\t_5^3}
\end{equation}
and 
{\tiny 
\begin{eqnarray}
\Ra_1 &=& \frac{3750\t_{0}^{5}+\t_{0}\t_{3}-625\t_{4}}{\t_{5}} \frac{\partial}{\partial \t_0} 
-\frac{390625\t_{0}^{6}-3125\t_{0}^{4}\t_{1}-390625\t_{0}\t_{4}-\t_{1}\t_{3}}{\t_{5}} \frac{\partial}{\partial \t_1 } 
\\ 
& & 
-\frac{5859375\t_{0}^{7}+625\t_{0}^{5}\t_{1}-6250\t_{0}^{4}\t_{2}-5859375\t_{0}^{2}\t_{4}-625\t_{1}\t_{4}-2\t_{2}\t_{3}}{\t_{5}} \frac{\partial}{\partial \t_2 }\nonumber \\ \nonumber
& & -\frac{9765625\t_{0}^{8}+625\t_{0}^{5}\t_{2}-9375\t_{0}^{4}\t_{3}-9765625\t_{0}^{3}\t_{4}-625\t_{2}\t_{4}-3\t_{3}^{2}}{\t_{5}} \frac{\partial}{\partial \t_3}
\\ \nonumber & &
+\frac{15625\t_{0}^{4}\t_{4}+5\t_{3}\t_{4}}{\t_{5}} \frac{\partial}{\partial \t_ 4} 
-\frac{625\t_{0}^{5}\t_{6}-9375\t_{0}^{4}\t_{5}-2\t_{3}\t_{5}-625\t_{4}\t_{6}}{\t_{5}} \frac{\partial}{\partial \t_ 5} \\ \nonumber  & & 
+\frac{9375\t_{0}^{4}\t_{6}-3125\t_{0}^{3}\t_{5}-2\t_{2}\t_{5}+3\t_{3}\t_{6}}{\t_{5}}\frac{\partial}{\partial \t_ 6}
\\ \nonumber
\Ra_{\ggtwo^{1}_{1}} &=& 
 \t_{5} \frac{\partial}{\partial \t_5 }+ \t_{6}\frac{\partial}{\partial \t_6 },
 \\ \nonumber
 \Ra_{ \ggzero_0}  &= & \t_{0} \frac{\partial}{\partial \t_0 }+ 2\t_{1} \frac{\partial}{\partial \t_1 } 
+3\t_{2} \frac{\partial}{\partial \t_2 } +4\t_{3} \frac{\partial}{\partial \t_3 }+ 5\t_{4} \frac{\partial}{\partial \t_4 }+
3\t_{5} \frac{\partial}{\partial \t_5 }+ 2\t_{6}\frac{\partial}{\partial \t_6 }\,,
\\ \nonumber
\Ra_{ \kgone_{1} } &=&
 -\frac{5\t_{0}^{4}\t_{6}-5\t_{0}^{3}\t_{5}-\frac{1}{625}\t_{2}\t_{5}+\frac{1}{625}\t_{3}\t_{6}}{\t_{0}^{5}-\t_{4}} 
 \frac{\partial}{\partial \t_ 1}+ \t_{6} \frac{\partial}{\partial \t_2 }+ \t_{5} \frac{\partial}{\partial \t_3 },\ \ \ \ \ 
 \\ \nonumber
 \Ra_{ \tgtwo_{11}} &=&
 \frac{625\t_{0}^{5}-625\t_{4}}{\t_{5}}\frac{\partial}{\partial \t_6 } \,,
 \\ \nonumber
 \Ra_{ \tgone_{1}} &=&  \frac{-3125\t_0^4-\t_3}{\t_5}\frac{\partial }{\partial \t_1}+  
 \frac{625(\t_0^5-\t_4)}{\t_5}\frac{\partial }{\partial \t_2}\,,
\\ \nonumber
\Ra_{\tgzero} &=& \frac{\partial }{\partial \t_1}\,.
\end{eqnarray}
}
Using the asymptotic behavior of $\Fg_g^\alg$'s in Ref.~\cite{Bershadsky:1993ta}, we know that the ambiguities of $\Fg_g$ arise from 
the coefficients of 
\begin{equation}
 \label{4mar2014}
\frac{P_g(\t_0,\t_4)}{(\t_4-\t_0^5)^{2g-2}},\ \ \ \deg(P_g)=36(g-1)\,.
\end{equation}
Knowing that we are using the weights (\ref{30sept}), we observe that it
depends on $[\frac{12(g-1)}{5}]+1$ coefficients. 
The monomials in \eqref{4mar2014} are divided into two groups,  those meromorphic in $\t_4-\t_0^5$ and the rest which is
$$
\t_0^a(\t_4-\t_0^5)^b,\ \ \ a+5b=2g-2,\ a,b\in\N_0.
$$
The coefficients of the first group can be fixed by the so called gap condition and the asymptotic behavior of $F_g^A$ at the conifold,
see for instance Ref.~\cite{Huang:2006hq}. One of the coefficients in the second group  can be solved using the 
asymptotic behavior of $\Fg_g^\alg$ at the maximal unipotent monodromy point. In total, we have $[\frac{2g-2}{5}]$ undetermined coefficients. It is not clear whether it is possible to solve them using only the data attached to the mirror quintic Calabi-Yau threefold. Using the generating function role that $\Fg_g^\alg$'s has on the $A-$model side for
counting curves in a generic quintic, one may solve all the ambiguities given enough knowledge of enumerative invariants, such computations 
are usually hard to perform, see for instance Ref.~\cite{Huang:2006hq} for a use of boundary data in the B-model and $A-$model counting data which determines the ambiguities up to genus $51$.

\section{Final remarks}
\label{remarkssection}
We strongly believe that a mathematical verification of mirror symmetry
at higher genus will involve the construction of the Lie algebra $\liealg$ in the $A$-model
Calabi-Yau variety $\check X$.
The genus zero case was established by Givental \cite{Givental:1996} and Lian, Liu and Yau \cite{Lian:1997} for many cases of Calabi-Yau threefolds and in particular the quintic case, that is, the period
manipulations of the $B$-model lead to the virtual number of rational 
curves in the $A$-model. The genus one case was proved by Zinger in Ref.~\cite{Zinger:2009}. 
The amount of computations and technical difficulties from genus zero to genus one case is significantly large. For higher genus there has been no 
progress. The period expressions involved in higher genus, see for instance Ref.~\cite{Yamaguchi:2004bt},  are usually huge and this is the main reason
why the methods used in Refs.~\cite{Givental:1996,Lian:1997, Zinger:2009} do not generalize. This urges us for an 
alternative description of the generating function of the number of higher genus curves in the $A$-model. 
The original  formulation in Ref.~\cite{Bershadsky:1993cx} using holomorphic anomaly equation for genus $g$ topological string partition functions,
is completely absent in the mathematical formulation of $A$-model using quantum differential equations. Motivated by this, we formulated
Theorem \ref{maintheo} and the algebraic holomorphic anomaly equation \eqref{aaeq}. 
Our work opens many other new conjectures in the $A$-model Calabi-Yau varieties. The most significant one is the following.
Let $\YukA_{ijk}^A$ and $\Fg_g^A,\ g\geq 1$ 
be the generating function of genus zero and genus $g$ Gromov-Witten
invariants of the $A$-model Calabi-Yau threefold $\check X$, respectively
\begin{conj}\rm
Let $\check X$ be a Calabi-Yau threefold with $\hn:=\dim(H^2_\dR(X))$ and let  $M$ be the sub-field 
of formal power series  generated by $\YukA_{ijk}^A,\exp(\Fg^{A}_1), \Fg_g^A,\ \ g\geq 2$
and their derivations under $q_i\frac{\partial }{\partial q_i},\ \ i=1,2,\ldots,\hn$. The transcendental degree of $M$ over
$\C$ is at most $a_{\hn}:=\frac{3\hn^2+7\hn+4}{2}$, that is, for any $a_{\hn}+1$ elements $x_1,x_2,\ldots, x_{a_\hn+1}$ of $M$ there is
a polynomial $P$ in $a_{\hn}+1$ variables and with coefficients in $\C$ such that $P(x_1,x_2,\ldots, x_{a_{\hn}+1})=0$.
\end{conj}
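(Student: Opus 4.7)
The strategy rests on the numerical identity $a_{\hn} = \hn + \dim(\BG) = \dim(\T)$, which suggests that $M$ should be realized as (the image of) the function field of the enhanced moduli space $\T$ under a $q$-expansion ring homomorphism. Since the image of any homomorphism from a $\C$-algebra of Krull dimension $d$ into an integral domain generates a field of transcendence degree at most $d$ over $\C$, a bound of $a_{\hn}$ on $\mathrm{tr.deg}_\C(M)$ would follow at once. The plan is to construct such a homomorphism explicitly, match its image on the distinguished generators $\Yuk_{ijk}, \exp(\Fg_1^\alg), \Fg_g^\alg$ with the $A$-model Gromov--Witten generating functions via the mirror map, and verify that it intertwines the vector fields $\Ra_i$ with the derivations $q_i \partial/\partial q_i$.

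The first step is to pick a MUM (maximally unipotent monodromy) point on the boundary of the classical moduli space $\mathcal{M} = \T/\BG$ and lift it to the transcendental leaf $\uhp \subset \T$ of \S\ref{GPDsection}. By Proposition \ref{18sep2014}, specifying a point on $\uhp$ is equivalent to choosing special coordinates $t^a$, and $q_a = e^{2\pi i t^a}$ are local coordinates at the MUM point. Each generator $\t_i$ of $\O_\T$ admits a well-defined $q$-expansion near such a point (verified explicitly for the mirror quintic in Ref.~\cite{ho22} and expected from general considerations of the large complex structure limit), yielding a ring homomorphism $\phi: \O_\T \to \C[[q_1,\ldots,q_\hn]]$ which extends to fractions after inverting the nonzero denominators that appear. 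Under $\phi$, combined with the mirror map and the $g_0^{2g-2}$ trivialization of the line bundle $\mathcal{L}^{2-2g}$, the B-model algebraic data match their A-model counterparts: $\phi$ sends $\Yuk_{ijk}$ to $\YukA^A_{ijk}$, $\exp(\Fg_1^\alg)$ to $\exp(\Fg_1^A)$, and $\Fg_g^\alg$ to $\Fg_g^A$.

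Next one checks that $\phi$ intertwines the vector field $\Ra_i$ on $\T$ with the derivation $q_i \partial/\partial q_i$ on $\C[[q]]$. On the leaf $\uhp$ the restriction of $\Ra_i$ equals $\partial/\partial t^i$, because the Gauss--Manin matrix $\gm_{\Ra_i}$ of Theorem \ref{maintheo} coincides with the matrix appearing in \eqref{determinantal}, which was derived in the special coordinates $t^a$; under $q_a = e^{2\pi i t^a}$ this becomes (up to the conventional factor $2\pi i$, absorbed into the choice of coordinates) the derivation $q_a \partial/\partial q_a$. Hence every generator of $M$ and all its iterated $q$-derivatives lie in $\phi(\mathrm{Frac}(\O_\T))$, so $M \subset \phi(\mathrm{Frac}(\O_\T))$, a field of transcendence degree at most $\dim(\T) = a_{\hn}$ over $\C$.

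The main obstacle lies in the identification of $\phi(\Fg_g^\alg)$ with the A-model generating function $\Fg_g^A$ for $g \geq 2$: this is the higher-genus mirror theorem, which is known for $g \leq 1$ through the work of Givental, Lian--Liu--Yau, and Zinger cited in \S\ref{remarkssection}, but is open for $g \geq 2$. A full proof of the conjecture therefore requires establishing, at least at the level of formal $q$-expansions, that the BCOV partition functions $\Fg_g^\alg$ of the B-model reproduce the Gromov--Witten generating functions $\Fg_g^A$ under the mirror map. A secondary technical point is the existence of the $q$-expansion homomorphism $\phi$ in the general case, which requires global control of the MUM behaviour of the generators $\t_i \in \O_\T$; this has been verified only in special examples such as the mirror quintic.
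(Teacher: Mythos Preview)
The paper does not prove this statement: it is explicitly labeled a \emph{Conjecture} and the authors write immediately afterward that ``our mathematical knowledge in enumerative algebraic geometry of Calabi-Yau threefolds is still far from any solution to the above conjecture.'' There is therefore no proof in the paper to compare against.

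Your outline is the natural strategy the authors have in mind, and you have correctly located the genuine gap yourself. The identity $a_{\hn}=\dim(\T)$ together with a $q$-expansion homomorphism $\phi:\O_\T\to\C[[q_1,\dots,q_{\hn}]]$ intertwining $\Ra_i$ with $q_i\partial/\partial q_i$ would indeed bound the transcendence degree of $\phi(\mathrm{Frac}(\O_\T))$ by $a_{\hn}$. But the step ``$\phi(\Fg_g^\alg)=\Fg_g^A$ for $g\geq 2$'' is precisely the higher-genus mirror theorem, which is open beyond $g=1$ (Givental, Lian--Liu--Yau at $g=0$; Zinger at $g=1$), as the paper itself emphasizes in \S\ref{remarkssection}. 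Without it you cannot conclude that the A-model generators $\Fg_g^A$ lie in the image of $\phi$, so the inclusion $M\subset\phi(\mathrm{Frac}(\O_\T))$ is unproven. Your secondary caveat about the existence and convergence of the $q$-expansion map outside the mirror quintic example is also a real issue. In short, your proposal is a correct diagnosis of what a proof would require rather than a proof; the conjecture remains open for exactly the reasons you identify.
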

The number $a_\hn$ is the dimension of the moduli space $\T$ in the Introduction. Our mathematical knowledge in enumerative
algebraic geometry of Calabi-Yau threefolds is still far from any solution to the above conjecture. 

Our reformulation of BCOV anomaly equation opens an arithmetic approach to Topological String partition functions. 
For many interesting example such as mirror quintic, $\T$ can be realized as an affine scheme over $\Z[\frac{1}{N}]$ 
for some integer $N$.
In this way we can do mod primes of $\Fg_g^\alg$'s which might give some insight into the arithmetic of 
Fourier expansions of $\Fg_g^\alg$'s.

In the case of mirror quintic we have partial compactifications of $\T$ given by $\t_4=0$, $\t_4-\t_0^5=0$ and $\t_5=0$.
The first two correspond to the maximal unipotent and conifold singularities. The degeneracy locus
$\t_5=0$ corresponds to degeneration of differential forms and not the mirror quintic itself. 
Our computations  show that the vector fields in Theorem \ref{maintheo} are holomorphic everywhere except $\t_5$. These statements cannot be seen for the proof of 
Theorem \ref{maintheo} and one may conjecture that similar statement in general must be valid. Of course one must first construct
the universal family $\X/\T$ and enlarge it to a bigger family using similar moduli spaces for limit mixed Hodge structures.



\def\cprime{$'$} \def\cprime{$'$} \def\cprime{$'$}

\newcommand{\etalchar}[1]{$^{#1}$}

Murad Alim \\  \emph{\small Mathematics Department, Harvard University, 1 Oxford Street, Cambridge MA, 02138 USA. \\and Jefferson Physical Laboratory, 17 Oxford Street, Cambridge, MA, 02138, USA}\\
{\small alim@math.harvard.edu}
\\
\\
Hossein Movasati\\
\emph{\small Mathematics Department, Harvard University, 1 Oxford Street, Cambridge MA, 02138 USA. \\
On sabbatical leave from Instituto de Matem\'atica Pura e Aplicada, IMPA, Estrada Dona Castorina, 110, 22460-320, Rio de Janeiro, RJ, Brazil.}\\
{\small hossein@impa.br}
\\
\\
Emanuel Scheidegger \\
\emph{\small Mathematisches Institut, Albert-Ludwigs-Universit\"at Freiburg,\\ \small Eckerstrasse 1, D-79104 Freiburg, Germany.}\\
{\small emanuel.scheidegger@math.uni-freiburg.de}
\\
\\
Shing-Tung Yau \\
\emph{\small Mathematics Department, Harvard University, 1 Oxford Street, Cambridge MA, 02138 USA.}
{\small yau@math.harvard.edu}

\end{document}